\documentclass[11pt]{amsart}
\usepackage{amsmath}
\usepackage[active]{srcltx}
\usepackage[T1]{fontenc}
\usepackage[utf8]{inputenc}
\usepackage{lmodern}
\usepackage{microtype}
\usepackage{cite}
\usepackage{verbatim}
\usepackage{amsmath,amsfonts,amssymb,amsthm}
\usepackage[mathcal]{eucal}
\usepackage{enumerate}
\usepackage[centertags]{amsmath}
\usepackage{graphics}
\usepackage{refcount}
\usepackage{tikz}
\usepackage{xcolor}
\usepackage{pgfplots}
\usepackage{amsfonts}
\usepackage{dsfont}
\usepackage{enumitem}

\newcommand{\func}[1]{\operatorname{#1}}
\pgfplotsset{compat=1.18}
\newtheorem{theorem}{Theorem}
\newtheorem{prop}{Proposition}

\newtheorem{lemma}{Lemma}
\newtheorem{remark}{Remark}
\newtheorem{definition}{Definition}

\begin{document}
\author{Ushangi Goginava and Farrukh Mukhamedov }
\title[Weighted Walsh--Carleson Operators]{Dyadic Martingale Transforms and Weighted Walsh-Carleson Operators}
\address{U. Goginava, Department of Mathematical Sciences \\
United Arab Emirates University, P.O. Box No. 15551\\
Al Ain, Abu Dhabi, UAE}
\email{zazagoginava@gmail.com; ugoginava@uaeu.ac.ae}
\address{F. Mukhamedov, Department of Mathematical Sciences \\
United Arab Emirates University, P.O. Box No. 15551,\\
Al Ain, Abu Dhabi, UAE}
\email{farrukh.m@uaeu.ac.ae}
\subjclass[2020]{Primary 42C10; Secondary 42B25, 42B20, 60G42}
\keywords{Walsh system; weighted Walsh--Carleson operator; Walsh--Fourier series; martingale transforms; dyadic martingales; maximal operators; weak type $(1,1)$; summability and divergence}

\begin{abstract}
We study weighted Walsh--Carleson maximal operators arising from dyadic
martingale transforms associated with Walsh--Fourier partial sums. For
weights satisfying a uniform dyadic variation condition and a uniform
bound at the top dyadic scale, we prove weak type~$(1,1)$ estimates for the
corresponding maximal operators along subsequences. We also give divergence
criteria in terms of the behavior of the weights near the top dyadic scale
and, under suitable admissibility assumptions, relate these criteria to
explicit ratio conditions. As applications, we obtain results on matrix
transforms of Walsh--Fourier partial sums, including de la Vallée Poussin
means, Cesàro means with varying parameters, Nörlund logarithmic means, and
general Nörlund means. In particular, we prove a Walsh--Paley analogue of
the Leindler--Tandori theorem and establish everywhere divergence results
for several summability methods.
\end{abstract}

\maketitle

\section{Introduction}

It is well known that almost everywhere convergence of sequences of
operators is intimately related to the weak-type behavior of the
corresponding maximal operators (see, for example, Stein~\cite{Stein}). In
the context of Fourier analysis, establishing weak-type inequalities for
such maximal operators is typically a delicate matter, largely due to their
singular nature. A prototypical example is provided by the Carleson--Hunt
theorem, which asserts the almost everywhere convergence of trigonometric
Fourier series for functions in $L_{p}([-\pi,\pi])$ with $p>1$, and whose
proof fundamentally relies on a weak-type estimate for the Carleson maximal
operator 
\begin{equation}
A^{\ast}(f)(x) := \sup_{n\in \mathbb{Z}} \left\vert \int_{-\pi}^{\pi}
f(x-t)\,\frac{e^{int}}{t}\,dt \right\vert.  \label{CO}
\end{equation}

In recent years there has been substantial progress in the development of
methods for estimating the operator~\eqref{CO} and various generalizations
thereof. Important contributions include the works of Sjölin~\cite{Sj1971},
Antonov~\cite{Ant1996}, Sjölin and Soria~\cite{SF2003}, Reyna~\cite%
{Arias-2002}, Hytönen, Lacey and Parissis~\cite{Lacey-2024}, Krause and
Lacey~\cite{Lacey2017,Lacey2018}, Oberlin, Seeger, Tao, Thiele and Wright~%
\cite{Tao2012}, Muscalu, Tao
and Thiele~\cite{Tao2006}, Benea, Bernicot, Lie and Vitturi~\cite{LieAdv2024}%
, as well as Lie~\cite{LieAdv22024,LieAnal2020}. A common feature of these
works is the heavy use of combinatorial properties of dyadic intervals
together with the fine behavior of geometric maximal operators.

Another central theme in this direction is the construction of
counterexamples demonstrating the optimality of function spaces, typically
Orlicz classes $\varphi(L)$, for various approximation processes. The
methods introduced by Kolmogorov~\cite{kolmogoroff1923serie}, Fefferman~\cite%
{Feff1971}, Stein~\cite{Stein}, Konyagin~\cite{Kon-2000}, Bochkarev~\cite%
{BochDiv,Bochk}, Olevskii~\cite{Ol1,Ol2}, Schipp et al.~\cite{SWS}, G\'at~%
\cite{gat2024}, Gát, Goginava and Karagulyan~\cite{GGKJMAA2015},
Karagulyan~\cite{karagula2016}, Getsadze~\cite{Gets1,Gets2}, Pan and Ai~\cite%
{Pan2023}, Oniani~\cite{On1,On2}, Goginava and Oniani~\cite{GoOn2021},
Goginava~\cite{GogJMAA2024}, and Goginava and Mukhamedov~\cite{GMProc2025}
are in a sense universal and often admit flexible modifications. These
techniques have proved to be powerful tools for constructing sharp
counterexamples in a variety of settings.

In the framework of Walsh systems, questions of almost everywhere
convergence and divergence are closely connected with martingale
transformations.
The partial sums of the Walsh--Fourier series of an integrable $f
$ can be written in the form 
\begin{equation*}
S_{n}(f)=w_{n}\sum_{k=0}^{\infty }\varepsilon _{k}(n)\,\mathcal{E} _{k}(fw_{n}),
\end{equation*}%
where $E_{k}$ denotes conditional expectation with respect to the $k$-th
dyadic $\sigma$-algebra and
\begin{equation*}
\mathcal{E}_{k}f:=E_{k+1}f-E_{k}f
\end{equation*}%
is the corresponding dyadic martingale difference operator. In this formula,
$\varepsilon _{k}(n)$ denotes the $k$-th Walsh digit of $n$, and $w_{n}$ is
the $n$-th Walsh function. Thus $S_{n}(f)$ can be represented as a martingale
transform.

Let $\{\Omega _{k}(n):k,n\in \mathbb{N}\}$ be a positive family such that,
for each fixed $n\in \mathbb{N}$, the sequence $\{\Omega _{k}(n)\}_{k\geq 0}$
is nondecreasing in $k$, that is, 
\begin{equation*}
0<\Omega _{k-1}(n)\leq \Omega _{k}(n),\qquad k,n\in \mathbb{N}.
\end{equation*}%
We introduce the operator 
\begin{equation*}
M_{n}(\boldsymbol{\Omega })f:=\sum_{k=0}^{\infty }\varepsilon
_{k}(n)\,\Omega _{k}(n)\,\mathcal{E}_{k}(fw_{n}),
\end{equation*}
which represents a martingale transform. Then we have
\begin{equation}
S_{n}(f)=w_{n}\,M_{n}(\boldsymbol{\Omega })f,  \label{s}
\end{equation}%
where $\Omega _{k}(n)\equiv 1$ for all $k\in \mathbb{N}$.  Equality~\eqref{s} emphasizes the close relationship between martingale
transforms and the partial sums of Walsh--Fourier series. Detailed accounts
of martingale transforms and their connections with Walsh dyadic analysis can
be found in \cite{WeBook2,Weisz-book1,BurkMT,chao1992,frangos1987,imk88,fraimk88}.

Based on this definition, we consider the corresponding weighted
Walsh--Carleson maximal operator 
\begin{equation}
W_{C}(\boldsymbol{\Omega})f := \sup_{n\in \mathbb{N}} \left\vert
\sum_{k=0}^{\infty} \varepsilon_{k}(n)\,\Omega_{k}(n)\,\mathcal{E}_{k}(f w_{n})
\right\vert .  \label{mt}
\end{equation}
We also introduce the weighted Walsh--Carleson maximal operator along a
subsequence $\{n_{a}: a\in \mathbb{N}\}$, defined by 
\begin{equation*}
W_{C}(\boldsymbol{\Omega},\{n_{a}\})f := \sup_{a\in \mathbb{N}} \left\vert
\sum_{k=0}^{\infty} \varepsilon_{k}(n_{a})\,\Omega_{k}(n_{a})\,\mathcal{E}_{k}(f
w_{n_{a}}) \right\vert .
\end{equation*}

Next, we define the kernel 
\begin{equation*}
P_{n}(\boldsymbol{\Omega}) := \sum_{k=0}^{\infty}
\varepsilon_{k}(n)\,\Omega_{k}(n)\,r_{k} D_{2^{k}},
\end{equation*}
where $r_{k}$ is the $k$-th Rademacher function and $D_{2^{k}}$ is the
Walsh--Dirichlet kernel of order $2^{k}$. It is straightforward to verify
that $M_{n}(\boldsymbol{\Omega})$ admits the convolution representation 
\begin{equation}
M_{n}(\boldsymbol{\Omega})f = (f w_{n}) \ast P_{n}(\boldsymbol{\Omega}) =
w_{n}\bigl( f \ast (w_{n} P_{n}(\boldsymbol{\Omega})) \bigr),  \label{conv}
\end{equation}
where $\ast$ denotes convolution on the dyadic group.

We observe that, for the classical Walsh--Carleson maximal operator, one takes
\begin{equation}
\Omega_{k}(n)=1,
\qquad 0\le k\le |n|,  \label{ps}
\end{equation}
with the convention that the terms with $k>|n|$ do not contribute because
$\varepsilon_k(n)=0$. With this choice, the operator in~\eqref{mt} reduces to
the classical Walsh--Carleson maximal operator.

Thus, the operator \eqref{mt} can be regarded as a weighted generalization
of the Walsh--Carleson operator. As will be shown below, it is also closely
connected with classical summability problems for Walsh--Fourier series.

\begin{definition}
\label{dee} Let $\bigl(\Omega _{k}(n)\bigr)_{k,n\geq 1}$ be a family of
weights, and let $\boldsymbol{\gamma }=\bigl(\gamma (n)\bigr)_{n=1}^{\infty }
$ be a sequence of positive integers. We say that $\gamma $ is a \emph{%
sequence of divergence with respect to} $\bigl(\Omega _{k}(n)\bigr)_{k,n\geq
1}$ if the following conditions hold:

\begin{enumerate}
\item $\gamma : \mathbb{N} \to \mathbb{N}$;

\item the sequence tends to infinity uniformly with respect to the dyadic
order, that is,
\begin{equation*}
\lim_{N\to\infty}\inf\{\gamma(n): n\in\mathbb{P},\ |n|\ge N\}=\infty;
\end{equation*}

\item 
\begin{equation*}
\sup_{n\in \mathbb{P}}\frac{\gamma (n)}{|n|}<1;
\end{equation*}

\item there exists a constant $c>0$ such that 
\begin{equation}
\Omega _{|n|-\gamma (n)}(n)\,\geq \,c\quad \text{for all }n\in \mathbb{P}.
\label{LME}
\end{equation}
\end{enumerate}
\end{definition}

The purpose of the present paper is to obtain sharp conditions on the weight
system $\boldsymbol{\Omega }$ under which the maximal operator $W_{C}(%
\boldsymbol{\Omega })$ is of weak type~$(1,1)$. Our main result is the
following theorem.

\begin{theorem}
\label{conj1} Let $\boldsymbol{\Omega }=\bigl(\Omega _{k}(n)\bigr)_{k,n\in 
\mathbb{N}}$ be a system of weights, and let $\left( n_{a}\right) _{a\in 
\mathbb{N}}\subset \mathbb{P}$ be a sequence of indices.

\medskip \noindent \textup{(a) Boundedness.} Assume that the weights along
the sequence $\left( n_{a}\right) _{a\in \mathbb{N}}$ satisfy the uniform
summability condition 
\begin{equation}
\sup_{a\in \mathbb{N}}\sum_{k=0}^{|n_{a}|}\bigl|\varepsilon
_{k-1}(n_{a})-\varepsilon _{k}(n_{a})\bigr|\,\Omega _{k}(n_{a})<\infty .
\label{cond}
\end{equation}%
Assume also the top-scale bound along the same sequence,
\begin{equation}
\sup_{a\in\mathbb{N}}\Omega_{|n_a|}(n_a)<\infty .
\label{top-bound-seq}
\end{equation}
Then the maximal operator $W_{C}(\boldsymbol{\Omega },\{n_{a}\})$ is of weak
type~\textup{(1,1)}; that is, equivalently, 
\begin{equation}
\Vert W_{C}(\boldsymbol{\Omega },\{n_{a}\})\Vert _{L_{1}(\mathbb{I}%
)\rightarrow L_{1,\infty }(\mathbb{I})}<\infty \label{weeek} .
\end{equation}

\medskip \noindent \textup{(b) Divergence.} If there exists a sequence of
divergence with respect to $\bigl(\Omega _{k}(n)\bigr)_{n=1}^{\infty }$,
then the full maximal operator $W_{C}(\boldsymbol{\Omega })$ is not of weak
type~\textup{(1,1)}; that is, 
\begin{equation*}
\Vert W_{C}(\boldsymbol{\Omega })\Vert _{L_{1}(\mathbb{I})\rightarrow
L_{1,\infty }(\mathbb{I})}=\infty .
\end{equation*}
\end{theorem}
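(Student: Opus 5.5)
\emph{Part (a)} should reduce to a pointwise domination by the dyadic maximal function $Mf:=\sup_{j}E_{j}|f|$, which is of weak type $(1,1)$ by Doob's inequality. Fix $n=n_a$ and write $g:=fw_{n}$ and $N:=|n|$. Since $\varepsilon_{k}(n)=0$ for $k>N$, the sum defining $M_{n}(\boldsymbol{\Omega})f$ is finite. I would perform an Abel summation in $k$, with the convention $\varepsilon_{-1}(n)\Omega_{-1}(n)=0$. Put
\begin{equation*}
\Delta_{j}:=\varepsilon_{j}(n)\Omega_{j}(n)-\varepsilon_{j-1}(n)\Omega_{j-1}(n)
=\varepsilon_{j-1}(n)\bigl(\Omega_{j}(n)-\Omega_{j-1}(n)\bigr)+\bigl(\varepsilon_{j}(n)-\varepsilon_{j-1}(n)\bigr)\Omega_{j}(n).
\end{equation*}
Telescoping then gives
\begin{equation*}
\sum_{k=0}^{N}\varepsilon_{k}(n)\Omega_{k}(n)\mathcal{E}_{k}g=\sum_{j=0}^{N}\Delta_{j}\,\bigl(E_{N+1}g-E_{j}g\bigr).
\end{equation*}
Because $|w_{n}|=1$, we have $|E_{j}g|\le E_{j}|f|\le Mf$ for every $j$. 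Hence the left-hand side is bounded in absolute value by $2\bigl(\sum_{j}|\Delta_{j}|\bigr)Mf$.

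It remains to bound the $\ell^{1}$ norm of the coefficients $\Delta_j$ uniformly in $a$. The first part of $\Delta_{j}$ is nonnegative by the monotonicity of $\Omega_k(n)$ in $k$, and its sum telescopes to at most $\Omega_{N}(n)$. This is controlled by \eqref{top-bound-seq}. The second part of $\Delta_{j}$ sums to exactly the quantity in \eqref{cond}. Therefore $W_{C}(\boldsymbol{\Omega},\{n_a\})f\le C\,Mf$ pointwise, and \eqref{weeek} follows.

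\emph{Part (b).} The plan is to test the operator on normalized needles, that is, on sums of functions of the form $2^{m}\mathbf 1_{I_{m}(t)}$, with $\|\cdot\|_{1}$ kept bounded. I would first compute the response to a single needle $f=2^{m}\mathbf 1_{I_m}$. Take $x\in I_{j}\setminus I_{j+1}$ with $j<m$, and choose $n$ so that $w_n$ is constant on the relevant atom. Then $E_{k}(fw_{n})(x)=\pm 2^{k}$ for $k\le j$ and $E_{k}(fw_{n})(x)=0$ for $k>j$. Consequently $|M_{n}(\boldsymbol\Omega)f(x)|\approx \varepsilon_{j}(n)\Omega_{j}(n)2^{j}$. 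Now choose $n$ with $|n|-\gamma(n)=j$ and $\varepsilon_j(n)=1$. Condition (4) of Definition~\ref{dee} then gives a lower bound $c\,2^{j}$ on a set of measure $\asymp 2^{-j}$. Condition (3) ensures that such $n$ exist for every large $j$, and condition (2) ensures that the $\gamma(n)$ free low digits of $n$ become arbitrarily many.

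A single needle only yields $\lambda\,|\{W_C f>\lambda\}|\gtrsim c$, which is bounded. The real step is to superpose about $2^{\gamma}$ needles placed at points $t_{1},\dots,t_{L}$ in dyadic position, with total $L^{1}$ norm $O(1)$. The $\gamma(n)$ free low Walsh digits of $n$ would then be used, pointwise in $x$, to choose the signs $w_{n}(t_i)$ so that the contributions add coherently. This is the Schipp--Wade--Simon / Bochkarev type construction, and it should produce a level set of measure $\gtrsim\gamma\cdot\lambda^{-1}$.

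I expect this sign-alignment combinatorics to be the main obstacle. It requires checking that $\Omega_{k}(n)$ for $k$ below $|n|-\gamma(n)$ does not destroy the gain; monotonicity in $k$ and the positivity of $\Omega_k(n)$ should help here. It also requires checking that the choice of $n$ may depend on $x$, which is allowed since $W_{C}$ is a supremum over $n$. Letting $\gamma\to\infty$ then shows that the weak-type constant is infinite.
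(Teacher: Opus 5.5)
Your part (a) is the paper's argument. It uses the same Abel summation with the splitting $\varepsilon_j\Omega_j-\varepsilon_{j-1}\Omega_{j-1}=\varepsilon_{j-1}(\Omega_j-\Omega_{j-1})+(\varepsilon_j-\varepsilon_{j-1})\Omega_j$. The monotone part telescopes to at most $\Omega_{|n_a|}(n_a)$, and the result is dominated pointwise by the dyadic maximal function. Telescoping against $E_{N+1}g-E_jg$ instead of $E_jg$ only costs a factor $2$, and your bookkeeping matches \eqref{cond} exactly, so this half is fine.

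\textbf{Part (b) is not proved.} Everything that makes the weak-type constant blow up is deferred to the ``sign-alignment combinatorics'' you flag as the main obstacle. What you actually establish is the single-needle estimate, which gives only a bounded quantity. The plan is also less straightforward than it looks, for three reasons.
\begin{enumerate}[label=(\roman*)]
\item Your needle formula is $M_n(\boldsymbol{\Omega})f(x)=w_n(t)\bigl(\sum_{k<j}\varepsilon_k(n)\Omega_k(n)2^k-\varepsilon_j(n)\Omega_j(n)2^j\bigr)$. So the size $\Omega_j(n)2^j$ needs the digits of $n$ just below $j$ to vanish; with all lower digits equal to $1$ and constant weights it drops to $\Omega_j(n)$.
\item The relative signs of needles that separate from $x$ at level $j$ depend only on the digits of $n$ in positions $j+1,\dots,|n|$ (at most $\gamma(n)$ of them), not on ``low'' digits. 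You also only need $|n|-\gamma(n)\le j$, not equality, which need not be attainable.
\item More seriously, superposition does not produce coherence by itself. Suppose the $2^\gamma$ needles run over all digit patterns of one block of length $\gamma$. For fixed $x$, the needles separating from $x$ at a given level cancel exactly unless $n$ has no digit above that level inside the block. The surviving levels contribute geometrically decreasing amounts, so nothing is gained over a single needle.
\end{enumerate}
The shape of the test function and the $x$-dependent choice of $n$ are the whole content of (b), and they are missing.

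\textbf{What the paper does instead.} Its mechanism is not needle alignment. It tests on $W_a=\eta_a^{-1/2}\prod_{j<\eta_a}\bigl(1+r_{a-2\eta_a+j}r_{a-\eta_a+j}\bigr)\sum_{l=a-2\eta_a}^{a-\eta_a-1}r_l$, which has $\|W_a\|_1\le1$ by Cauchy--Schwarz. For each $x$ it takes $n_a(x)$ whose binary expansion is one block of digits read off from $x$ ($\varepsilon_k=x_k$ or $1-x_k$), written twice.
\begin{itemize}
\item The repetition gives $w_{n_a(x)}\equiv1$ on the support of $W_a$, so the modulation does not push the spectrum up to level $|n|$.
\item On a set of measure $1/2$ the transform reduces to $\eta_a^{-1/2}\sum_k\varepsilon_k\Omega_k(n_a(x))r_k(x)$ over the lower block.
\item All these weights are $\ge c$, by monotonicity and $\gamma(n_a(x))\ge2\eta_a$.
\item The $x$-adapted digits make $\gtrsim\eta_a$ of the terms share one sign, giving $\gtrsim\sqrt{\eta_a}$.
\end{itemize}
The gain is the gap between the $L^1$ norm of a Rademacher sum and its size after a pointwise $0/1$ selection of its terms.

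The paper's claim that the upper copy contributes nothing needs a small repair. On $\{x_{a-2\eta_a}\neq x_{a-\eta_a}\}$ the martingale difference at level $a-\eta_a$ survives. It disappears if the bottom digit of the block is set to $0$.
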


\begin{remark}
	In part (b) of Theorem \ref{conj1} the problem can be formulated in a more
	general setting. Let $\left\{ n_{a}:a\in \mathbb{N}\right\} $ be a
	subsequence of integers, and suppose that condition (\ref{cond}) fails along
	the sequence $\left\{ n_{a}:a\in \mathbb{N}\right\} $ . The question is
	whether this failure necessarily implies that 
	\begin{equation*}
		\Vert W_{C}(\mathbf{\Omega ,}\left\{ n_{a}\right\} )f\Vert _{L_{1}(\mathbb{I}%
			)\rightarrow L_{1,\infty }(\mathbb{I})}=\infty .
	\end{equation*}%
	In general, this problem has a negative answer. Indeed, in his solution of
	the Balashov problem, Konyagin \cite{kon} proved that, in the case of partial sums (see
	(\ref{ps})), there exists a sequence $\left\{ n_{a}:a\in \mathbb{N}\right\} $
	for which condition (\ref{cond}) does not hold, while condition (\ref{weeek}%
	) is nevertheless satisfied. \newline
	\newline
	\textbf{\textit{In the general setting, however, this problem remains open.}}
	Moreover, even in the special case of partial sums (\ref{ps}), the problem
	of finding necessary and sufficient conditions on a subsequence $\left\{
	n_{a}:a\in \mathbb{N}\right\} $ that guarantee condition (\ref{weeek}) is
	still unresolved. This problem was posed by Konyagin \cite{Kon2006} in 2006,
	at the \textbf{International Congress of Mathematicians}, and a partial
	solution was obtained in a paper by the first author jointly with Oniani 
	\cite{GoOn2021}.
\end{remark}

Since the divergence part of Theorem~\ref{conj1} is formulated in terms of
the existence of a sequence of divergence with respect to $\bigl(\Omega_k(n)%
\bigr)_{n=1}^{\infty}$, this criterion is not, at first sight, easy to
verify in concrete applications. This naturally leads to the following two
questions:

\begin{enumerate}
\item Can one provide simple, verifiable criteria for the existence of a
sequence of divergence?

\item What is the precise relationship between the existence of a sequence
of divergence and the stronger condition 
\begin{equation}
\sup_{n\in \mathbb{N}}\sum_{k=0}^{|n|}\Omega _{k}(n)=\infty \,?
\label{Omega-sum-infty}
\end{equation}
\end{enumerate}

First we observe that, in general, condition \eqref{Omega-sum-infty} does
not guarantee the existence of a sequence of divergence. Indeed, consider
the family of weights 
\begin{equation*}
\Omega_k(n) := \frac{1}{|n|-k+1}, \qquad n \in \mathbb{N},\ 0 \le k \le |n|.
\end{equation*}
For each fixed $n$ we have 
\begin{equation*}
\sum_{k=0}^{|n|} \Omega_k(n) = \sum_{k=0}^{|n|} \frac{1}{|n|-k+1} =
\sum_{j=1}^{|n|+1} \frac{1}{j},
\end{equation*}
so that 
\begin{equation*}
\sup_{n\in\mathbb{N}} \sum_{k=0}^{|n|} \Omega_k(n) = \sup_{m\in\mathbb{N}}
\sum_{j=1}^m \frac{1}{j} = \infty,
\end{equation*}
and hence condition \eqref{Omega-sum-infty} is satisfied.

On the other hand, for any integer $\gamma $ with $0\leq \gamma \leq |n|$ we
have 
\begin{equation*}
\Omega _{|n|-\gamma }(n)=\frac{1}{|n|-(|n|-\gamma )+1}=\frac{1}{\gamma +1}.
\end{equation*}%
In particular, for any choice of integers $\gamma (n)\rightarrow \infty $
with $0\leq \gamma (n)<|n|$, we obtain 
\begin{equation*}
\Omega _{|n|-\gamma (n)}(n)=\frac{1}{\gamma (n)+1}\longrightarrow 0\quad 
\text{as }n\rightarrow \infty .
\end{equation*}%
Consequently, there is no sequence $\gamma :\mathbb{N}\rightarrow \mathbb{N}$
with $\gamma (n)\rightarrow \infty $ for which there exists a constant $c>0$
satisfying 
\begin{equation*}
\Omega _{|n|-\gamma (n)}(n)\geq c\quad \text{for all }n.
\end{equation*}

As we shall see below, the underlying obstruction in this example is the
behaviour of the double ratio limit 
\begin{equation}
\lim_{k,n\rightarrow \infty }\frac{\Omega _{k}(n)}{\Omega _{k-1}(n)},
\label{limmE2}
\end{equation}%
which, for the weights considered above, does not exist. Indeed, we have 
\begin{equation*}
\frac{\Omega _{k}(n)}{\Omega _{k-1}(n)}=1+\frac{1}{|n|-k+1}.
\end{equation*}%
If we take $k=|n|$, then $|n|-k=0$, and hence 
\begin{equation*}
\frac{\Omega _{|n|}(n)}{\Omega _{|n|-1}(n)}=2.
\end{equation*}%
In contrast, along any sequence $(k_{j},n_{j})$ in the triangular index
region $\{(k,n):1\leq k<|n|\}$ for which $|n_{j}|-k_{j}\rightarrow \infty $,
we have 
\begin{equation*}
\frac{\Omega _{k_{j}}(n_{j})}{\Omega _{k_{j}-1}(n_{j})}=1+\frac{1}{%
|n_{j}|-k_{j}+1}\longrightarrow 1.
\end{equation*}%
Thus the limit in (\ref{limmE2}) fails to exist. 

To this end, we first introduce the notion of the limit of a double sequence
over a cone. First, we define the following cone: there exists a constant $%
\kappa \in (0,1)$ such that%
\begin{equation*}
\Delta _{\kappa }:=\bigl\{(k,n)\in \mathbb{N}\times \mathbb{P}:\kappa
|n|<k\leq |n|\bigr\}.
\end{equation*}%
We now introduce a sharper cone defined as follows. Let $\omega =(\omega
_{n})_{n=0}^{\infty }$ be a sequence of positive integers satisfying the
following conditions: 
\begin{equation}
\omega _{n}\nearrow \infty \quad \text{and}\quad \frac{|n|}{\omega _{n}}%
\nearrow \infty .  \tag{M}
\end{equation}%
Define%
\begin{equation*}
\Delta _{\omega }:=\bigl\{(k,n)\in \mathbb{N}\times \mathbb{P}:|n|-\omega
_{n}<k\leq |n|\bigr\}.
\end{equation*}

We notice that $\Delta _{\omega }\subset \Delta _{\kappa}$. For the sake of shortness, we write $\Delta $ to denote either $\Delta_{\kappa }$ or $\Delta_{\omega }$. Given a family $\left( a_{k}(n)\right) _{(k,n)\in \Delta}$ and $L\in \mathbb{C}$, we say that 
\begin{equation*}
\lim_{(k,n)_{\Delta }\rightarrow \infty }a_{k}(n)=L
\end{equation*}%
if for every $\varepsilon >0$ there exists $N_{0}\in \mathbb{N}$ such that 
\begin{equation*}
|a_{k}(n)-L|<\varepsilon \quad \text{whenever }(k,n)\in \Delta \text{ and }%
|n|\geq N_{0}.
\end{equation*}

A sequence $\{\Omega _{k}(n)\}$ is called \textit{$\Delta$-admissible}, if it satisfies the following conditions:
\begin{itemize}
	\item[(a)]  there exists a finite limit $L$ such
	that 
	\begin{equation}
		\lim_{(k,n)_{\Delta }\rightarrow \infty }\frac{\Omega _{k}(n)}{\Omega
			_{k-1}(n)}=L.  \label{limmE}
	\end{equation}
	\item[(b)]  The sequence $\{\Omega _{|n|}(n)\}$ is uniformly bounded above and
	below, that is, there exist constants $0<c\leq C<\infty $ such that 
	\begin{equation}
		c\leq \Omega _{|n|}(n)\leq C\qquad (n\in \mathbb{P}).  \label{<1}
	\end{equation}%
	Equivalently, $\Omega _{|n|}(n)\asymp 1$.
\end{itemize} 

Now we answer Question~1 formulated above.

\begin{theorem}\label{T2}  Let $\{\Omega _{k}(n)\}$ be a $\Delta$-admissible sequence. Then, the following statements are equivalent:

\begin{enumerate}
\item[(i)] The triangular limit  satisfies 
\begin{equation}
\lim_{(k,n)_{\Delta }\rightarrow \infty }\frac{\Omega _{k}(n)}{\Omega
_{k-1}(n)}=1;  \label{sim}
\end{equation}

\item[(ii)] There exists a sequence of divergence with respect to $\bigl(%
\Omega_k(n)\bigr)_{k,n\ge 1}$.
\end{enumerate}
\end{theorem}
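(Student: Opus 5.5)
The plan is to prove the two implications separately. Both use the telescoping identity
\begin{equation*}
\Omega_{|n|-\gamma}(n)=\Omega_{|n|}(n)\prod_{j=|n|-\gamma+1}^{|n|}\frac{\Omega_{j-1}(n)}{\Omega_{j}(n)},
\end{equation*}
together with the two-sided bound \eqref{<1} on the top weight $\Omega_{|n|}(n)$. Monotonicity in $k$ gives $\Omega_k(n)/\Omega_{k-1}(n)\ge 1$, so the limit in \eqref{limmE} automatically satisfies $L\ge 1$. The dichotomy to exploit is therefore $L=1$ versus $L>1$.

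\emph{(ii)$\Rightarrow$(i).} I argue by contraposition. Suppose $L>1$ and fix $q$ with $1<q<L$. By \eqref{limmE} there is $N_0$ such that $\Omega_j(n)/\Omega_{j-1}(n)\ge q$ for all $(j,n)\in\Delta$ with $|n|\ge N_0$.

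Let $\gamma$ be any candidate sequence of divergence. By condition (2) of Definition~\ref{dee}, $\gamma(n)\to\infty$ uniformly in $|n|$. The goal is to show that the indices $j\in(|n|-\gamma(n),|n|]$ lie in the cone, at least for a growing portion of them.
\begin{itemize}
\item For $\Delta=\Delta_\kappa$: I intend to use $\sup\gamma(n)/|n|<1$ to place the whole range in $\Delta_\kappa$. This works if $\kappa$ is compatible, and otherwise I restrict to the top $\min(\gamma(n),\lfloor(1-\kappa)|n|\rfloor)$ indices, a number that still tends to infinity.
\item For $\Delta=\Delta_\omega$: I use the top $m(n)=\min(\gamma(n),\omega_n)\to\infty$ indices.
\end{itemize}
Since $\Omega$ is monotone in $k$, we get
\begin{equation*}
\Omega_{|n|-\gamma(n)}(n)\le \Omega_{|n|-m(n)}(n)\le C\,q^{-m(n)}\to 0,
\end{equation*}
which contradicts \eqref{LME}. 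Hence no sequence of divergence exists, and (ii) forces $L=1$.

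\emph{(i)$\Rightarrow$(ii).} Here I construct $\gamma$ explicitly. Set
\begin{equation*}
\delta(N)=\sup\bigl\{\log\bigl(\Omega_k(n)/\Omega_{k-1}(n)\bigr):(k,n)\in\Delta,\ |n|\ge N\bigr\}.
\end{equation*}
By \eqref{sim}, $\delta(N)\to0$ as $N\to\infty$. Choose
\begin{equation*}
\gamma(n)=\max\Bigl(1,\min\bigl(\lfloor 1/\sqrt{\delta(|n|)}\rfloor,\ g(|n|)\bigr)\Bigr),
\end{equation*}
with the following choice of $g$:
\begin{itemize}
\item $g(N)=\lfloor(1-\kappa)N/2\rfloor$ in the $\Delta_\kappa$ case;
\item $g(N)=\omega_n-1$ in the $\Delta_\omega$ case, using (M) so that $\omega_n\le|n|/2$ eventually.
\end{itemize}
If $\delta(|n|)=0$, read $1/\sqrt{\delta(|n|)}$ as $+\infty$. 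For the finitely many small $|n|$ where these expressions misbehave, set $\gamma(n)=1$.

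Checking the conditions of Definition~\ref{dee}:
\begin{itemize}
\item $\gamma(n)\to\infty$ uniformly in $|n|$, because both entries of the min grow in $|n|$. This gives condition (2).
\item $\sup_n\gamma(n)/|n|<1$ holds by the choice of $g$, which gives condition (3).
\item All indices $j\in(|n|-\gamma(n),|n|]$ lie in $\Delta$, so the telescoping identity gives
\begin{equation*}
\Omega_{|n|-\gamma(n)}(n)\ge c\,e^{-\gamma(n)\delta(|n|)}\ge c\,e^{-\sqrt{\delta(|n|)}}\ge c\,e^{-1}
\end{equation*}
for large $|n|$. For the remaining $n$, positivity and monotonicity give $\Omega_{|n|-1}(n)\ge \min(\text{finitely many positive values})$, so condition \eqref{LME} holds after shrinking the constant.
\end{itemize}

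The step I expect to be the main obstacle is the bookkeeping that keeps all the relevant indices inside the cone $\Delta$ while still letting $\gamma(n)\to\infty$ uniformly, together with handling the finitely many small $n$. The core estimates are just the telescoping product with $q^{-m}$ in one direction and $e^{-\gamma\delta}$ in the other.
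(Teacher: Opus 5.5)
Your proof is correct and follows essentially the same route as the paper. For (ii)$\Rightarrow$(i) you restrict to the top $\min\{\gamma(n),h_\Delta(n)\}$ levels of the cone and telescope to get geometric growth, which contradicts $\Omega_{|n|}(n)\le C$ together with $\Omega_{|n|-\gamma(n)}(n)\ge c$, exactly as the paper does. For (i)$\Rightarrow$(ii) the paper takes $\gamma(n)$ to be the largest $j\le h_\Delta(n)$ with $\max_{m-j<k\le m}\bigl(\Omega_k(n)/\Omega_{k-1}(n)-1\bigr)\le 1/j$, whereas you take $\gamma(n)\approx\min\{\delta(|n|)^{-1/2},g\}$ from a uniform modulus; this is only a cosmetic difference, since both choices keep the telescoped product of the top $\gamma(n)$ ratios bounded by $e$.
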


Next we address Question~2 formulated above and clarify the relationship
between $\boldsymbol{\gamma }$ is a \emph{sequence of divergence with
respect to} $\bigl(\Omega _{k}(n)\bigr)_{k,n\geq 1}$ and %
\eqref{Omega-sum-infty}.

\begin{theorem}
\label{T3}  Let $\kappa\in(0,1)$ be fixed, and assume that
$\{\Omega_k(n)\}$ is $\Delta_\kappa$-admissible. Then the following two
conditions are equivalent:

\begin{enumerate}
\item[(i)]
\begin{equation}  \label{kapa}
\lim_{(k,n)_{\Delta_\kappa}\to\infty} \frac{\Omega_k(n)}{\Omega_{k-1}(n)} =
1;
\end{equation}

\item[(ii)] One has
\begin{equation}  \label{div22}
\sup_{n\in\mathbb{N}} \sum_{k=0}^{|n|} \Omega_k(n) = \infty.
\end{equation}
\end{enumerate}

Moreover, if $\{\Omega_k(n)\}$ is $\Delta_\omega$-admissible and
\begin{equation}  \label{omega}
\lim_{(k,n)_{\Delta_\omega}\to\infty} \frac{\Omega_k(n)}{\Omega_{k-1}(n)} =
1,
\end{equation}
then \eqref{div22} follows. This sufficient condition is not necessary in
general.
\end{theorem}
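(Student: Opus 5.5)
The plan is to use two facts throughout: $\Omega_k(n)$ is nondecreasing in $k$, so every ratio $\Omega_k(n)/\Omega_{k-1}(n)$ is at least $1$ and hence the limit $L$ in \eqref{limmE} satisfies $L\ge 1$; and the top-scale bound \eqref{<1}, $c\le \Omega_{|n|}(n)\le C$. Along the cone, products of consecutive ratios then control $\Omega_{|n|-j}(n)$ from above and below in terms of $\Omega_{|n|}(n)\asymp 1$.

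\emph{(i)$\Rightarrow$(ii).} Fix $M\in\mathbb{N}$ and choose $\varepsilon>0$ with $(1+\varepsilon)^{-M}\ge 1/2$. By \eqref{kapa} there is $N_0$ such that $\Omega_{k-1}(n)\ge \Omega_k(n)/(1+\varepsilon)$ whenever $(k,n)\in\Delta_\kappa$ and $|n|\ge N_0$. Take $|n|\ge N_0$ large enough that $(1-\kappa)|n|>M$. Iterating down from $k=|n|$ gives
\[
\Omega_{|n|-j}(n)\ge c(1+\varepsilon)^{-j}\ge c/2 \qquad (0\le j\le M).
\]
Hence $\sum_{k=0}^{|n|}\Omega_k(n)\ge cM/2$, and since $M$ is arbitrary, \eqref{div22} follows.

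\emph{(ii)$\Rightarrow$(i).} I argue by contraposition: suppose $L>1$ and set $q=(1+L)/2>1$. For $|n|\ge N_0$ and $(k,n)\in\Delta_\kappa$ we have $\Omega_{k-1}(n)\le \Omega_k(n)/q$, so
\[
\Omega_{|n|-j}(n)\le Cq^{-j}\qquad \text{for } 0\le j<(1-\kappa)|n|.
\]
The terms in the cone therefore sum to at most $Cq/(q-1)$. For the remaining indices $k\le\kappa|n|$, monotonicity in $k$ bounds each term by the value at the bottom edge of the cone, which is $\lesssim Cq^{-(1-\kappa)|n|}$. There are at most $|n|+1$ such terms, so their total is $\lesssim C(|n|+1)q^{-(1-\kappa)|n|}$, which is bounded uniformly in $n$. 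Only finitely many $n$ satisfy $|n|<N_0$ (namely $n<2^{N_0}$), and each gives a finite sum. Thus the supremum in \eqref{div22} is finite, contradicting (ii).

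\emph{The $\Delta_\omega$ statement.} The argument is the same as for (i)$\Rightarrow$(ii), with the cone depth $(1-\kappa)|n|$ replaced by $\omega_n$. Since $\omega_n\to\infty$, for each fixed $M$ we eventually have $\omega_n>M$, and again the sum is at least $cM/2$.

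\emph{Non-necessity.} I would give an explicit counterexample, which I expect to be the main point requiring care, since admissibility and monotonicity must both hold. Take $\omega_n=\lfloor \tfrac12\log_2|n|\rfloor$ for $|n|$ large; this depends only on $|n|$ and satisfies (M). Define
\[
\Omega_k(n)=2^{k-|n|}\quad\text{for } |n|-\omega_n\le k\le |n|,\qquad \Omega_k(n)=2^{-\omega_n}\quad\text{for } k<|n|-\omega_n.
\]
This family is positive and nondecreasing in $k$, with $\Omega_{|n|}(n)=1$. Every ratio in $\Delta_\omega$ equals $2$, so the family is $\Delta_\omega$-admissible with $L=2\ne 1$, and \eqref{omega} fails. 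On the other hand,
\[
\sum_{k=0}^{|n|}\Omega_k(n)\ge (|n|-\omega_n)\,2^{-\omega_n}\gtrsim |n|^{1/2}\to\infty,
\]
so \eqref{div22} holds even though \eqref{omega} does not.
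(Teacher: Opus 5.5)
Your proof is correct and follows essentially the same route as the paper: the contrapositive with geometric decay $\Omega_{|n|-j}(n)\le Cq^{-j}$ inside $\Delta_\kappa$ for (ii)$\Rightarrow$(i), telescoping of ratios near the top scale for (i)$\Rightarrow$(ii) and for the $\Delta_\omega$ statement, and the same flat-then-geometric counterexample for non-necessity. The differences are minor: you inline the telescoping estimate instead of going through the sequence of divergence from Theorem~\ref{T2}, you bound the terms with $k\le\kappa|n|$ by $(|n|+1)$ times an exponentially small edge value rather than by $\frac{\kappa}{1-\kappa}$ times the cone sum, and you explicitly handle the finitely many $n$ with $|n|<N_0$, which the paper leaves implicit.
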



\begin{remark}
The boundedness of maximal operators associated with partial sums is one of
the central topics in Fourier analysis. Theorems~\ref{conj1}, \ref{T2} and %
\ref{T3} show that, in the Walsh setting, the weak-type $(1,1)$ boundedness
of the weighted Walsh--Carleson maximal operator reduces to the simple and
explicit condition~\textup{(\ref{sim})} on the weight $\mathbf{\Omega }$.
\end{remark}

We next discuss everywhere divergence and almost everywhere convergence for sequences of operators associated with the matrix 
	$\mathbb{T}:=(t_{k,n})_{k,n\in \mathbb{P}}$ which enjoys 
the following assumptions:
\begin{enumerate}
	\item $t_{k,n} \ge 0$ for all $k,n\in\mathbb{P}$;
	
	\item $t_{k,n} \le t_{k+1,n}$ for all $k,n\in\mathbb{P}$ (rowwise
	monotonicity);
	
	\item $\displaystyle\sum_{k=0}^{n}t_{k,n}=1$ for all $n\in \mathbb{P}$.
\end{enumerate}
	
By means of $\mathbb{T}$, the $n$th triangular matrix transform of the Walsh--Fourier series of $f$ is
	defined by 
	\begin{equation}
		\mathcal{T}_{n}^{\mathbb{T}}(f;x) := \sum_{k=1}^{n} t_{k,n}\,S_{k}(f;x),
		\qquad n\in\mathbb{P}.  \label{weight-mean}
	\end{equation}

\begin{theorem}
\label{divergence}  The following statements hold: 

\begin{enumerate}
\item[\textup{(1)}] \textbf{Uniform boundedness implies pointwise
convergence.}

Let $\{n_{a}: a\in \mathbb{N}\}$ be any increasing sequence of natural
numbers.  If  
\begin{equation}  \label{h2}
\sup_{a\in \mathbb{N}} \bigl\|\mathcal{T}_{n_{a}}^{\mathbb{T}}\bigr\|_{H_{1}(%
\mathbb{I}) \to L_{1}(\mathbb{I})} < \infty,
\end{equation}
then for every $f\in L_{1}(\mathbb{I})$ one has  
\begin{equation*}
\mathcal{T}_{n_{a}}^{\mathbb{T}}(f;x) \longrightarrow f(x)  \quad\text{as }
a\to\infty,  
\end{equation*}
for almost every $x\in \mathbb{I}$.

\item[\textup{(2)}] \textbf{Everywhere divergence.}

If  
\begin{equation}  \label{eqq2}
\lim_{(k,n)_{\Delta_{\omega}}\to\infty} \frac{\widetilde{T}_{2^{k},n}}{%
\widetilde{T}_{2^{k-1},n}} = 1,
\end{equation}
then there exists a function $f_{0}\in L_{1}(\mathbb{I})$ such that  
\begin{equation*}
\limsup_{n\to\infty}  \left|\mathcal{T}_{n}^{\mathbb{T}}(f_{0};x)\right|  =
\infty  
\end{equation*}
at every point $x\in \mathbb{I}$. 
\end{enumerate}
\end{theorem}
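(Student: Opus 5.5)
Both parts go through the martingale-transform representation \eqref{s}. Rearranging the triangular mean, I would write $\mathcal{T}_n^{\mathbb{T}}(f)=w_n M_n(\boldsymbol{\Omega})f$ with weights built from the tail sums of the $n$th row, roughly $\Omega_k(n)=\widetilde{T}_{2^k,n}$. Here I read $\widetilde{T}_{j,n}$ as a tail sum $\sum_{i\ge j}t_{i,n}$, or a closely related normalized quantity defined later in the paper. Concretely, $S_i(f)=\sum_{j<i}\hat f(j)w_j$, so $\mathcal{T}_n^{\mathbb{T}}(f)=\sum_j \widetilde{T}_{j+1,n}\hat f(j)w_j$. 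This is a Walsh multiplier whose symbol is nonincreasing in $j$.

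\emph{Part (1).} I would use the standard route: norm bounds plus a dense class, then a maximal inequality. For Walsh polynomials, $\mathcal{T}_{n_a}^{\mathbb{T}}P\to P$ everywhere, because $\widetilde{T}_{j,n}\to1$ for each fixed $j$. This follows from rowwise monotonicity together with $\sum_k t_{k,n}=1$, which give $t_{k,n}\le 1/(n-k+1)$, so each fixed partial tail converges. The main task is to show that $\sup_a|\mathcal{T}_{n_a}^{\mathbb{T}}f|$ is of weak type $(1,1)$.
\begin{itemize}
\item Under \eqref{h2}, the kernels $K_{n_a}$ satisfy $\sup_a\|K_{n_a}\|_{H_1\to L_1}<\infty$. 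For a monotone multiplier, Abel summation writes $K_n$ as $\sum_j(\widetilde{T}_{j,n}-\widetilde{T}_{j+1,n})D_j=\sum_j t_{j,n}D_j$, which is a Fejér-type average of Dirichlet kernels.
\item Via \eqref{s}, I would write $\mathcal{T}_n$ as $w_nM_n(\boldsymbol{\Omega})$ and check conditions \eqref{cond} and \eqref{top-bound-seq} of Theorem~\ref{conj1}(a) along $\{n_a\}$.
\item I expect \eqref{h2} to be equivalent to a bound of the form $\sup_a\sum_k|\varepsilon_{k-1}(n_a)-\varepsilon_k(n_a)|\,\widetilde{T}_{2^k,n_a}<\infty$. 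The reason is that testing \eqref{h2} on atoms, or on $D_{2^m}$-type functions modulated by $w_{n_a}$, picks out exactly these dyadic-digit-change weights.
\item The top bound \eqref{top-bound-seq} is automatic, since $\widetilde{T}\le1$.
\end{itemize}
Theorem~\ref{conj1}(a) then gives the weak type $(1,1)$ of the maximal operator, and the usual density argument yields a.e.\ convergence.

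\textbf{Main obstacle.} The hardest step is extracting \eqref{cond} from the $H_1\to L_1$ bound \eqref{h2}. I would try testing on the $H_1$ atoms $a=2^m(\mathbb{1}_{I_m}-\mathbb{1}_{I_m\dotplus e_{m-1}})$, modulated by $w_{n_a}$. The aim is to show that $\|\mathcal{T}_{n_a}a\|_1$ dominates, up to constants, the sum of the weights at the positions where the binary digits of $n_a$ change. This uses the standard lower bounds $\|D_{n}\|_1\gtrsim\sum_k|\varepsilon_k(n)-\varepsilon_{k+1}(n)|$.

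\emph{Part (2).} Condition \eqref{eqq2} is exactly \eqref{omega} for $\Omega_k(n)=\widetilde{T}_{2^k,n}$. Admissibility is mostly automatic: $\Omega_{|n|}(n)=\widetilde{T}_{2^{|n|},n}$ is comparable to $1$ by monotonicity of the rows, since the mass beyond $2^{|n|}\ge n/2$ is at least $1/2$. Theorem~\ref{T2} then supplies a sequence of divergence $\gamma$. Theorem~\ref{conj1}(b) gives that the maximal operator is not of weak type $(1,1)$, and via \eqref{s} this transfers to $\sup_n|\mathcal{T}_n^{\mathbb{T}}f|$.

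Everywhere divergence needs more than failure of weak type. I would build $f_0=\sum_j\lambda_j f_j$ as follows.
\begin{itemize}
\item Each $f_j$ is a translated-and-averaged version of the extremal function from part (b): $f_j=2^{m_j}\mathbb{1}_{I_{m_j}}$ averaged over a dyadic net of shifts, in the style of the Bochkarev/Gát constructions.
\item The points $n$ with $|n|=m_j$ and digit pattern determined by $\gamma$ make $|\mathcal{T}_n f_j(x)|\ge c\,\gamma(n)$ on the whole of $\mathbb{I}$, not merely on a set of large measure. This is achievable because the divergence lower bound \eqref{LME} is uniform in $x$ after averaging over translates.
\item Choose $\lambda_j=1/j^2$ with $\lambda_j\gamma_j\to\infty$, and take the $m_j$ lacunary so that earlier blocks are essentially reproduced and later blocks are negligible at the chosen indices $n$.
\end{itemize}
This gives $\limsup_n|\mathcal{T}_n^{\mathbb{T}}f_0(x)|=\infty$ at every $x$.
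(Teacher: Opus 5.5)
Your proposal has a genuine gap in part~(2), at the point you pass over quickly: going from a lower bound on a large set to divergence at \emph{every} point.

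\textbf{Part (2).} Condition \eqref{LME} is a lower bound on the weights, not on anything depending on $x$. So ``uniform in $x$ after averaging over translates'' does not constitute an argument.

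What the block construction of Theorem~\ref{conj1}(b) actually gives is $|M_{n_a(x)}(\boldsymbol{\Omega})W_a(x)|\gtrsim\sqrt{\eta_a}$ only for $x\in E_a$, a set of measure $1/2$, with the digits of $n_a(x)$ chosen from $x$. Dirac-type blocks such as your $2^{m}\mathds{1}_{I_m}$ gain nothing without this $x$-dependent sign alignment. For the martingale-transform part, $M_n(\boldsymbol{\Omega})D_{2^m}=P_n(\boldsymbol{\Omega})=O(1/x)$ when $|n|<m$, and $M_n(\boldsymbol{\Omega})D_{2^m}=\Omega_{|n|}(n)\,w_nD_{2^m}$ when $|n|\ge m$.

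Moreover, the remaining kernel pieces $V^{\mathbb T}_{n,1}$, $V^{\mathbb T}_{n,2}$ in \eqref{V=V1+V2+V3} can only be controlled through a weak-type estimate such as \eqref{eq:weak11}, that is, off an exceptional set. So some null set of points always escapes.

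The paper handles this in two stages, neither of which appears in your plan.
\begin{enumerate}
\item In Lemma~\ref{l1}, the sets $E_{a_k}$ are independent. Two Borel--Cantelli arguments, one for the $E_{a_k}$ and one for the sets where the $V^{\mathbb T}_{n,1}+V^{\mathbb T}_{n,2}$ part is large, give divergence on a set $E^1$ of full measure.
\item In Lemma~\ref{l2}, the leftover null set $E^0$ is covered by finite unions of dyadic intervals $A_j$ with $|A_j|<\varepsilon_j$, each point of $E^0$ lying in infinitely many $A_j$. Then $W^0_{b_j}$ is built with spectrum in $[2^{b_j},2^{b_j+1})$ by dividing the Walsh coefficients of $\alpha_jr_{b_j}\mathds{1}_{A_j}$ by the multipliers $T^{(k+1)}_{2^{b_j+2}}$, which are bounded below. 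Hence $\mathcal T^{\mathbb T}_{2^{b_j+2}}W^0_{b_j}=\alpha_jr_{b_j}\mathds{1}_{A_j}$ exactly, while $\|W^0_{b_j}\|_1\lesssim\alpha_j|A_j|^{1/2}$ is summable.
\end{enumerate}
Only the interlaced lacunary sum $f_0=\sum_k(W^0_{b_k}+W^1_{a_k})$ diverges everywhere. This null-set device is the missing idea.

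\textbf{A structural error affecting both parts.} $\mathcal T^{\mathbb T}_n$ is not a martingale transform $w_nM_n(\boldsymbol{\Omega})$. By \eqref{V=V1+V2+V3}, $w_nV^{\mathbb T}_n=V^{\mathbb T}_{n,1}+V^{\mathbb T}_{n,2}+V^{\mathbb T}_{n,3}$, and only $V^{\mathbb T}_{n,3}$ has that form, with weights $\widetilde T_{n(s),n}=T_n^{(n^{(s+1)}+1)}$.

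Also, $\widetilde T_{m,n}=\sum_{l=0}^{m-1}t_{n-l,n}$ is the mass of the \emph{last} $m$ entries of the row, not the tail $\sum_{i\ge m}t_{i,n}$. With your reading, $\widetilde T_{2^k,n}$ would be nonincreasing in $k$, which violates the standing monotonicity hypothesis of Theorem~\ref{conj1}. Your reading would also break $\Omega_{|n|}(n)\asymp 1$: for $n=2^m$ your tail is just $t_{n,n}$.

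Consequently, the transfer ``via \eqref{s}'' in part~(2) and the use of Theorem~\ref{conj1}(a) in part~(1) both need a separate maximal estimate for the Fejér-kernel pieces.

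\textbf{Part (1).} The paper gives no proof here and cites \cite{GogMukh2024}. Your sketch leaves the decisive implication \eqref{h2}$\Rightarrow$\eqref{cond} as a heuristic, so it remains an outline rather than an argument.
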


To establish this result, we use the kernel decomposition recalled in
Appendix~A and apply Theorem~\ref{conj1} to the weights
\begin{equation*}
\Omega_s(n)=\widetilde T_{2^s,n},\qquad 0\le s\le |n|.
\end{equation*}

\begin{remark}
If, in addition, the weights satisfy  
\begin{equation*}
\lim_{(k,n)_{\Delta_{\kappa}}\to\infty}  \frac{\widetilde{T}_{2^{k},n}}{%
\widetilde{T}_{2^{k-1},n}} = 1  
\end{equation*}
for some $\kappa\in(0,1)$, then, by Theorem~\ref{T2}, condition  \eqref{eqq2}
in Theorem~\ref{divergence} may be equivalently replaced by  
\begin{equation*}
\sup_{n\in \mathbb{N}}  \bigl\|\mathcal{T}_{n}^{\mathbb{T}}\bigr\|_{H_{1}(%
\mathbb{I})\to L_{1}(\mathbb{I})}  = \infty.  
\end{equation*}
\end{remark}

\begin{remark}
Part~\textup{(1)} of Theorem~\ref{divergence} was established in  \cite%
{GogMukh2024}. The proof of part~\textup{(2)} relies on several auxiliary 
lemmas and is given below.
\end{remark}

Theorem~\ref{divergence} can be applied to recover many classical results,
as well as to obtain new ones, for the Walsh--Paley system.

\subsection{Divergence of partial sums of Walsh-Fourier series}

Let%
\begin{equation*}
t_{k,n}:=\mathds{1}_{\{n\}}(k),n\in \mathbb{N}.
\end{equation*}%
Then the matrix $\mathbb{S}:=\{t_{k,n}:k,n\in \mathbb{N}\}$ generates the
sequence of partial sum operators $\mathcal{T}_{n}^{\mathbb{S}}$, that is, 
\begin{equation*}
\mathcal{T}_{n}^{\mathbb{S}}(f;x)=S_{n}(f;x).
\end{equation*}

In this case it is easy to see that%
\begin{equation*}
\lim_{(k,n)_{\Delta _{\kappa }}\rightarrow \infty }\left( \frac{\widetilde{T}%
_{2^{k},n}}{\widetilde{T}_{2^{k-1},n}}\right) =1.
\end{equation*}%
Therefore, by Theorem~\ref{conj1}, 
\begin{equation}
\Bigl\|\sup_{n\in \mathbb{N}}|\mathcal{T}_{n}^{\mathbb{S}}|\Bigr\|_{L_{1}(%
\mathbb{I})\rightarrow L_{1,\infty }(\mathbb{I})}=\infty .  \label{eq-st}
\end{equation}

On the other hand, invoking Theorem~\ref{divergence} in this particular
situation, we recover the classical result of Stein~\cite{Stein}, which
asserts that there exists an integrable function $f\in L_{1}(\mathbb{I})$
whose Walsh--Fourier series has divergent partial sums at every point $x\in 
\mathbb{I}$.

\subsection{Almost everywhere convergence of the de la Vallée Poussin means
of Walsh--Fourier series}

Let $\{\lambda_{n}\}_{n=1}^{\infty}$ be a nondecreasing sequence of integers satisfying $1\leq \lambda_n\leq n$. We define the de la Vallée Poussin means by 
\begin{equation}
V_{n}^{(\lambda)}(f;x) := \frac{1}{\lambda_{n}+1} \sum_{k=n-\lambda_{n}}^{n}
S_{k}(f;x),  \label{VP}
\end{equation}
where $S_{k}(f;x)$ denotes the $k$-th partial sum of the Fourier series of $f
$. Leindler~\cite{lein68} proved that if 
\begin{equation}
n = O(\lambda_{n}) \qquad (n\to\infty),  \label{leid}
\end{equation}
then $V_{n}^{(\lambda)}(f;x) \to f(x)$ as $n\to\infty$ almost everywhere for
every integrable function $f$. He also raised the question whether condition %
\eqref{leid} is essential. Tandori~\cite{tand1979} solved Leindler's problem
and showed that condition~\eqref{leid} is indeed necessary.

The analogous problem for the Walsh--Paley system has remained open. Theorem~%
\ref{divergence} provides the Walsh--Paley counterpart of the
Leindler--Tandori theorem and yields a complete solution of this problem.

\begin{theorem}
\label{VP-W}  \textbf{Leindler--Tandori theorem for the Walsh--Paley system} 

\begin{enumerate}
\item[\textup{(a)}]  If condition~\eqref{leid} holds, then for every
integrable function  $f \in L_{1}(\mathbb{I})$ the de la Vallée Poussin
means \eqref{VP} with  respect to the Walsh--Paley system converge almost
everywhere to $f$, i.e.  
\begin{equation*}
V_{n}^{(\lambda)}(f;x) \longrightarrow f(x)  \quad \text{as } n\to\infty,  
\end{equation*}
for almost every $x \in \mathbb{I}$.

\item[\textup{(b)}]  If  
\begin{equation}
\frac{n}{\lambda_{n}} \longrightarrow \infty \qquad (n\to\infty),
\label{div55}
\end{equation}
then there exists an integrable function $f \in L_{1}(\mathbb{I})$ for which
the sequence $\{V_{n}^{(\lambda)}(f;x)\}_{n=1}^{\infty}$ diverges everywhere
on $\mathbb{I}$. 
\end{enumerate}
\end{theorem}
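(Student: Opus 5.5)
We treat the two parts separately. Both are reduced to Theorem~\ref{divergence} applied to the triangular matrix of the de la Vallée Poussin means, namely
\begin{equation*}
t_{k,n}=\frac{1}{\lambda_n+1}\,\mathds{1}_{[n-\lambda_n,\,n]}(k).
\end{equation*}
Conditions (1)--(3) on $\mathbb{T}$ are immediate. Rowwise monotonicity holds except at the single jump at $n-\lambda_n$. If this is a problem, we use the freedom in the kernel decomposition of Appendix~A, which only uses the weights $\widetilde T_{2^s,n}$.

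\textbf{Part (a).} By part~(1) of Theorem~\ref{divergence} with $n_a=a$, it suffices to prove $\sup_n\|V_n^{(\lambda)}\|_{H_1\to L_1}<\infty$. We use the Fejér representation. Writing $\sigma_m=\frac1m\sum_{k<m}S_k$, we have
\begin{equation*}
V_n^{(\lambda)}=\frac{(n+1)\sigma_{n+1}-(n-\lambda_n)\sigma_{n-\lambda_n}}{\lambda_n+1}.
\end{equation*}
The Walsh--Fejér means are uniformly bounded from $H_1$ to $L_1$; they are in fact uniformly $L_1$-bounded, since the Walsh--Fejér kernels have uniformly bounded $L_1$ norms. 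This gives
\begin{equation*}
\|V_n^{(\lambda)}\|_{H_1\to L_1}\le C\,\frac{2n+1}{\lambda_n+1},
\end{equation*}
which is bounded precisely under \eqref{leid}. (Alternatively, one can check \eqref{cond} for the weights $\Omega_s(n)=\widetilde T_{2^s,n}$ and use Theorem~\ref{conj1}(a).)

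\textbf{Part (b).} We verify \eqref{eqq2} for a suitably chosen $\omega$. We compute $\widetilde T_{2^s,n}$ explicitly from its definition in Appendix~A, as partial sums of the row $(t_{k,n})$ sampled at the dyadic points $2^s$. Since the row is constant, equal to $1/(\lambda_n+1)$, on a window of length $\lambda_n+1$ and zero elsewhere, $\widetilde T_{2^s,n}$ is an explicit piecewise-linear function of $2^s$. The ratio $\widetilde T_{2^k,n}/\widetilde T_{2^{k-1},n}$ differs from $1$ only by a quantity controlled by the relative position of $2^k$ and $2^{k-1}$ with respect to the window $[n-\lambda_n,n]$. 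Roughly, this quantity is $O(\lambda_n/2^k)$, or the ratio is exactly $1$.

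For $(k,n)\in\Delta_\omega$ we have $2^k\ge 2^{|n|-\omega_n}\ge n\,2^{-\omega_n-1}$. We therefore choose
\begin{equation*}
\omega_n\approx\tfrac12\log_2\!\bigl(n/\lambda_n\bigr),
\end{equation*}
regularized so as to be nondecreasing, tending to infinity, and with $|n|/\omega_n\nearrow\infty$, as condition (M) requires. The regularization is possible because $n/\lambda_n\to\infty$ by \eqref{div55}, and we may truncate $\omega_n$ by $\sqrt{|n|}$. With this choice,
\begin{equation*}
\frac{\lambda_n}{2^k}\lesssim \frac{\lambda_n 2^{\omega_n}}{n}\le\Bigl(\frac{\lambda_n}{n}\Bigr)^{1/2}\to0
\end{equation*}
uniformly on $\Delta_\omega$, so \eqref{eqq2} holds. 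Part~(2) of Theorem~\ref{divergence} then yields $f_0\in L_1(\mathbb{I})$ with $\limsup_n|V_n^{(\lambda)}(f_0;x)|=\infty$ at every point.

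\textbf{Main obstacle.} The main difficulty is the exact bookkeeping of $\widetilde T_{2^s,n}$ near the top scale. The bound $\lambda_n/2^k$ must hold uniformly, including the case where $n$ is close to $2^{|n|}$, so that the window sits at the very top dyadic block. There we must also make sure that $\widetilde T_{2^k,n}$ is bounded away from $0$, so that the ratio is meaningful. If a uniform estimate fails for some $n$, we will pass to the subfamily of $n$ for which it holds. This is enough, because everywhere divergence needs divergence only along some subsequence of $n$.
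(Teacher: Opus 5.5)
Your proposal is correct. Part~(b) is essentially the paper's argument. The paper also takes $\omega_n=\lfloor(|n|-|\lambda_n|)/2\rfloor\approx\tfrac12\log_2(n/\lambda_n)$, observes that the cone $\Delta_\omega$ lies above the scale of the window, and applies Theorem~\ref{divergence}(2). Your truncation by $\sqrt{|n|}$ is a small genuine improvement. For bounded $\lambda_n$ the paper's choice gives $\omega_n\approx|n|/2$, so (M) fails there, contrary to the paper's claim that $\omega_n/|n|\to0$.

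The difficulty you flag as the main obstacle does not exist. The quantity $\widetilde T_{m,n}=\sum_{l<m}t_{n-l,n}$ sums the last $m$ entries of the row, so $\widetilde T_{m,n}=\min(m,\lambda_n+1)/(\lambda_n+1)$. It never vanishes and does not depend on the binary digits of $n$. With your $\omega_n\le\tfrac12\log_2(n/\lambda_n)$, take $(k,n)\in\Delta_\omega$ with $n/\lambda_n\ge16$. Then $2^{k-1}\ge 2^{|n|-\omega_n}>\tfrac12(n\lambda_n)^{1/2}\ge\lambda_n+1$, so both weights equal $1$ and the ratio is exactly $1$.

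So no passage to a subfamily is needed. That matters, because Theorem~\ref{divergence}(2) as stated requires \eqref{eqq2} on the whole cone, and the subfamily fallback would not be covered by it. Likewise, rowwise monotonicity holds with no exception: the jump at $n-\lambda_n$ goes from $0$ up to $1/(\lambda_n+1)$.

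For part~(a) your route differs from the paper's. The paper bounds $\sum_{s\le|n|}\widetilde T_{2^s,n}\lesssim 1+|n|-|\lambda_n|$. It then relies on the cited criterion behind Theorem~\ref{divergence}(1) to turn this into uniform $H_1\to L_1$ bounds. You instead write $V_n^{(\lambda)}$ as a combination of two Walsh--Fejér means with coefficients $O(n/\lambda_n)$, and use only the uniform boundedness of the $L_1$ norms of the Walsh--Fejér kernels.

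Your route is more elementary and in fact gives more. It yields the pointwise bound $\sup_n|V_n^{(\lambda)}f|\le \bigl(\sup_n\frac{2n+1}{\lambda_n+1}\bigr)\,\sigma^{*}f$, where $\sigma^*$ is the Walsh--Fejér maximal operator. Schipp's weak type $(1,1)$ estimate for $\sigma^*$ then gives a.e.\ convergence without invoking Theorem~\ref{divergence}(1) at all.

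What the paper's route buys is a uniform viewpoint. A single quantity, $\sum_s\widetilde T_{2^s,n}\asymp 1+\log_2(n/\lambda_n)$, is bounded exactly under \eqref{leid} and unbounded under \eqref{div55}. This ties both halves of the theorem to the weight framework of Theorems~\ref{conj1}--\ref{T3}.
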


We also note that significant results on divergence phenomena for sequences
obtained by matrix transformations of partial sums of the trigonometric
system were obtained by Totik~\cite{totik1982}. We further emphasize the
recent paper of G\'at~\cite{gat2024}, where he answered a question posed by
Zalcwasser~\cite{Zalcwasser1936} in 1936 concerning almost everywhere
convergence of arithmetic means of subsequences of partial sums of Fourier
series.

\subsection{Almost everywhere convergence of the Ces\`aro means of
Walsh-Fourier series}

Let 
\begin{equation*}
\mathbb{A}:=\left\{ t_{k,n}:=\frac{A_{n-k}^{\alpha _{n}-1}}{A_{n}^{\alpha
_{n}}}:k=0,\dots ,n,\ n\in \mathbb{N}\right\} ,
\end{equation*}%
where $\{\alpha _{n}:n\in \mathbb{N}\}\subset (0,1]$ and 
\begin{equation*}
A_{n}^{\alpha _{n}}:=\frac{(\alpha _{n}+1)\cdots (\alpha _{n}+n)}{n!},\qquad
A_{0}^{\alpha _{n}}:=1.
\end{equation*}%
We have%
\begin{equation*}
\frac{\widetilde{T}_{2^{s},n}}{\widetilde{T}_{2^{s-1},n}}=\left( 1+\frac{%
\alpha _{n}}{2^{s-1}}\right) \cdots \left( 1+\frac{\alpha _{n}}{2^{s}-1}%
\right) .
\end{equation*}%
Hence%
\begin{eqnarray*}
\ln \left( \frac{\widetilde{T}_{2^{s},n}}{\widetilde{T}_{2^{s-1},n}}\right)
&=&\sum\limits_{k=2^{s-1}}^{2^{s}-1}\ln \left( 1+\frac{\alpha _{n}}{k}%
\right) =\sum\limits_{k=2^{s-1}}^{2^{s}-1}\left( \frac{\alpha _{n}}{k}%
+O\left( \frac{1}{k^{2}}\right) \right) \\
&=&\alpha _{n}\sum\limits_{k=2^{s-1}}^{2^{s}-1}\frac{1}{k}+O\left( \frac{1}{%
2^{s}}\right) =\alpha _{n}\ln \left( 2\right) +o\left( 1\right) \text{ \ }%
\left( s\rightarrow \infty \right) .
\end{eqnarray*}

We consider two cases:

\noindent \textbf{Case (a):} $\alpha _{n}\equiv \alpha \in (0,1]$ is
constant. In this situation one has 
\begin{equation*}
\lim_{(s,n)_{\Delta _{\kappa }}\rightarrow \infty }\left( \frac{\widetilde{T}%
_{2^{s},n}}{\widetilde{T}_{2^{s-1},n}}\right) =2^{\alpha }>1.
\end{equation*}%
Consequently, by Theorem~\ref{conj1}, the maximal operator 
\begin{equation*}
f\longmapsto \sup_{n\in \mathbb{N}}|\mathcal{T}_{n}^{\mathbb{A}}f|
\end{equation*}%
is of weak type~$(1,1)$; that is, 
\begin{equation*}
\Bigl\|\sup_{n\in \mathbb{N}}|\mathcal{T}_{n}^{\mathbb{A}}|\Bigr\|_{L_{1}(%
\mathbb{I})\rightarrow L_{1,\infty }(\mathbb{I})}<\infty .
\end{equation*}%
This result goes back to Schipp~\cite{schipp1975certain}.

\noindent \textbf{Case (b):} $\displaystyle\lim_{n\rightarrow \infty }\alpha
_{n}=0$. In this case we obtain 
\begin{equation*}
\lim_{(s,n)_{\Delta _{\kappa }}\rightarrow \infty }\left( \frac{\widetilde{T}%
_{2^{s},n}}{\widetilde{T}_{2^{s-1},n}}\right) =\lim_{(s,n)_{\Delta \left( 
\boldsymbol{\omega ,}\mathcal{M}\right) }\rightarrow \infty }2^{\alpha
_{n}}=1.
\end{equation*}%
Hence, by Theorem~\ref{conj1}, the maximal operator $\sup_{n\in \mathbb{N}}|%
\mathcal{T}_{n}^{\mathbb{A}}|$ fails to be of weak type~$(1,1)$, that is, 
\begin{equation*}
\Bigl\|\sup_{n\in \mathbb{N}}|\mathcal{T}_{n}^{\mathbb{A}}|\Bigr\|_{L_{1}(%
\mathbb{I})\rightarrow L_{1,\infty }(\mathbb{I})}=\infty .
\end{equation*}

Moreover, Theorem~\ref{divergence} yields the existence of an integrable
function $f\in L_{1}(\mathbb{I})$ such that the Ces\`{a}ro means $\mathcal{T}%
_{n}^{\mathbb{A}}(f;x)$ diverge at every point $x\in \mathbb{I}$. We note
that the almost-everywhere divergence in this setting was established
recently in~\cite{GogJMAA2024}. From the viewpoint of \emph{everywhere}
divergence, the result obtained here appears to be new. We also remark that
the analogous question for the trigonometric system remains open.

\subsection{Almost everywhere divergence of the Nörlund logarithmic means of
Walsh-Fourier series}

Consider the Nörlund logarithmic means generated by the matrix 
\begin{equation*}
\mathbb{L}:=\left\{ t_{k,n}:=\frac{1}{\ln (n)}\frac{1}{n-k}:k=0,\dots ,n-1,\
n\in \mathbb{N}\right\} ,
\end{equation*}%
where 
\begin{equation*}
\ln (n):=\sum_{k=1}^{n}\frac{1}{k}
\end{equation*}%
denotes the $n$th harmonic number. A straightforward calculation shows that 
\begin{equation*}
\lim_{(s,n)_{\Delta _{\kappa }}\rightarrow \infty }\left( \frac{\widetilde{T}%
_{2^{s},n}}{\widetilde{T}_{2^{s-1},n}}\right) =\lim_{(s,n)_{\Delta _{\kappa
}}\rightarrow \infty }\frac{s\ln \left( 2\right) +\gamma +2^{-s-1}+O\left(
2^{-2s}\right) }{\left( s-1\right) \ln \left( 2\right) +\gamma
+2^{-s}+O\left( 2^{-2s}\right) }=1.
\end{equation*}%
Hence the maximal operator 
\begin{equation*}
f\longmapsto \sup_{n\in \mathbb{N}}|\mathcal{T}_{n}^{\mathbb{L}}f|
\end{equation*}%
does \emph{not} satisfy a weak type~$(1,1)$ inequality; that is, 
\begin{equation*}
\Bigl\|\sup_{n\in \mathbb{N}}|\mathcal{T}_{n}^{\mathbb{L}}|\Bigr\|_{L_{1}(%
\mathbb{I})\rightarrow L_{1,\infty }(\mathbb{I})}=\infty .
\end{equation*}

Applying Theorem~\ref{divergence}, we deduce the existence of an integrable
function $f\in L_{1}(\mathbb{I})$ whose Nörlund logarithmic means (with
respect to the Walsh system) diverge everywhere. Almost-everywhere
divergence for $\mathcal{T}_{n}^{\mathbb{L}}$ was proved by the first author
together with Gát in~\cite{GatGogiAMSDiv2}. More recently, this phenomenon
was extended to general orthogonal systems, yielding in particular a Nö%
rlund-logarithmic analogue of Bochkarev's classical theorem in~\cite%
{GMProc2025}.

\subsection{Almost everywhere convergence of the Nörlund means of
Walsh-Fourier series}

Consider the general Nörlund means generated by 
\begin{equation*}
\mathbb{Q}:=\left\{ t_{k,n}:=\frac{q_{n-k}}{Q_{n}}:k=0,\dots ,n,\ n\in 
\mathbb{N}\right\} ,
\end{equation*}%
where $\{q_{n}:n\in \mathbb{N}\}$ is a non-increasing sequence of
nonnegative numbers and 
\begin{equation*}
Q_{n}:=\sum_{k=0}^{n}q_{k}.
\end{equation*}

\begin{theorem}
\label{T-Norl}Assume that $\{q_n\}_{n\ge0}$ is non-increasing,
$q_n\ge0$, and $Q_n:=\sum_{k=0}^n q_k>0$ for every $n$. The following two
statements hold.
\begin{enumerate}
\item[\textup{(a1)}] If
\begin{equation}
\sup_{n\in \mathbb{N}}\left( \frac{1}{Q_{2^{|n|}}}\sum_{s=0}^{|n|}Q_{2^{s}}%
\right) <\infty,  \label{Q-cond}
\end{equation}
then, for every $f\in L_{1}(\mathbb{I})$,
\begin{equation*}
\mathcal{T}_{n}^{\mathbb{Q}}(f) \longrightarrow f
\end{equation*}
almost everywhere on $\mathbb{I}$.

\item[\textup{(b1)}] If \eqref{Q-cond} fails, that is,
\begin{equation}
\sup_{n\in \mathbb{N}}\left( \frac{1}{Q_{2^{|n|}}}\sum_{s=0}^{|n|}Q_{2^{s}}%
\right) =\infty,  \label{div2}
\end{equation}
then there exists an integrable function $f\in L_{1}(\mathbb{I})$ such that
the Nörlund means $\mathcal{T}_{n}^{\mathbb{Q}}(f;x)$ diverge at every point
$x\in \mathbb{I}$. Thus everywhere divergence follows from the sole
condition \eqref{div2}, without any additional regularity hypothesis.
\end{enumerate}
\end{theorem}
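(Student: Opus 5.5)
We reduce Theorem~\ref{T-Norl} to Theorem~\ref{divergence} and Theorem~\ref{conj1} through the weights $\Omega_s(n)=\widetilde T_{2^s,n}$, $0\le s\le |n|$, from the kernel decomposition of Appendix~A. The first step is to identify these weights for the Nörlund matrix. From the appendix, $\widetilde T_{j,n}$ is (up to normalization) a tail sum of the row $t_{\cdot,n}$ near the diagonal. For $t_{k,n}=q_{n-k}/Q_n$ this gives
\begin{equation*}
\widetilde T_{2^s,n}=\frac{Q_{2^s}}{Q_n}\quad\text{or}\quad \widetilde T_{2^s,n}\asymp\frac{Q_{2^s}}{Q_n}.
\end{equation*}
Since $2^{|n|}\le n<2^{|n|+1}$ and $q$ is non-increasing, $Q_n\le Q_{2^{|n|+1}}\le 2Q_{2^{|n|}}$. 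Hence $\Omega_{|n|}(n)\asymp 1$, which settles the top-scale bound \eqref{top-bound-seq} and admissibility condition (b). The weights are also nondecreasing in $s$, as Theorem~\ref{conj1} requires.

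\textbf{Part (a1).} Since $|\varepsilon_{k-1}-\varepsilon_k|\le 1$, the summability condition \eqref{cond} is implied by
\begin{equation*}
\sup_n\sum_{s\le|n|}\frac{Q_{2^s}}{Q_n}\lesssim\sup_n\frac{1}{Q_{2^{|n|}}}\sum_{s\le|n|}Q_{2^s}<\infty,
\end{equation*}
which is exactly \eqref{Q-cond}. Theorem~\ref{conj1}(a), applied to the full sequence, then gives weak type $(1,1)$ for the maximal operator of the martingale-transform part. The appendix decomposition writes $\mathcal T_n^{\mathbb Q}$ as this part plus a term controlled by the Fejér-type maximal operator, and that term is of weak type $(1,1)$. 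Almost everywhere convergence then follows by the standard density argument, since convergence holds for Walsh polynomials. Alternatively, one can verify the uniform $H_1\to L_1$ bound \eqref{h2} and invoke Theorem~\ref{divergence}(1).

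\textbf{Part (b1).} The goal is to show that \eqref{div2} forces the ratio condition \eqref{eqq2} on the cone $\Delta_\omega$ for a suitable $\omega$ satisfying (M), and then to apply Theorem~\ref{divergence}(2). The ratio is $Q_{2^k}/Q_{2^{k-1}}\in[1,2]$, and it depends only on $k$, not on $n$. Monotonicity of $q$ gives $Q_{2^k}-Q_{2^{k-1}}\le Q_{2^{k-1}}-Q_{2^{k-2}}$, so the increments $d_k:=Q_{2^k}-Q_{2^{k-1}}$ are non-increasing. Write $\rho_k=Q_{2^k}/Q_{2^{k-1}}$.
\begin{itemize}
\item If $\limsup\rho_k>1$ along a set of $k$ with bounded gaps, then $Q_{2^s}$ grows geometrically and \eqref{Q-cond} holds, a contradiction.
\item One first tries to show that \eqref{div2} implies $\rho_k\to1$ outright. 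This is plausible: $d_k$ is non-increasing while $Q_{2^k}$ is nondecreasing, so $d_k/Q_{2^{k-1}}$ is non-increasing and therefore has a limit. If that limit is positive, the growth is geometric and \eqref{Q-cond} holds. Hence \eqref{div2} gives $\rho_k\to1$.
\end{itemize}
Because the ratio is independent of $n$, the limit over $\Delta_\omega$ reduces to $k\to\infty$ with $k>|n|-\omega_n\to\infty$. Any $\omega$ satisfying (M) therefore works, and Theorem~\ref{divergence}(2) yields a function $f_0\in L_1(\mathbb I)$ whose Nörlund means diverge everywhere.

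The main obstacle is step one. It requires the precise form of $\widetilde T_{2^s,n}$ from the appendix, including its normalization and whether it involves $Q_{n}-Q_{n-2^s}$ rather than $Q_{2^s}$. If it is the former, one would need comparability $Q_n-Q_{n-2^s}\asymp Q_{2^s}$ for $s\le|n|$. The lower bound follows from monotonicity of $q$ together with $Q_{2^{s+1}}\le 2Q_{2^s}$; the upper bound should follow from the same two facts, but this has to be checked. The ratio limit for that variant also has to be rechecked uniformly in $n$.
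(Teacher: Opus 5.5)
Your part (a1) is fine, and your alternative (verify the uniform $H_1\to L_1$ bound and apply Theorem~\ref{divergence}(1)) is exactly the paper's route. The ``main obstacle'' you flag is not one. By definition $\widetilde T_{2^s,n}=\sum_{l<2^s}t_{n-l,n}=Q_{2^s-1}/Q_n$, so the consecutive ratio $Q_{2^s-1}/Q_{2^{s-1}-1}$ depends only on $s$.

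\textbf{The gap is the second bullet of (b1).}
\begin{itemize}
\item The increments $d_k=Q_{2^k}-Q_{2^{k-1}}$ are not non-increasing: $q\equiv1$ gives $d_k=2^{k-1}$, and monotonicity of $q$ only yields $d_k\le 2d_{k-1}$. Nor need $d_k/Q_{2^{k-1}}$ be monotone.
\item The conclusion that \eqref{div2} forces $\rho_k\to1$ is false. Start with $q\equiv1$ on an initial segment, then put $q_j=\epsilon_i:=2^{-K_i-i}Q_{2^{K_i}}$ on $(2^{K_i},2^{K_{i+1}}]$ with $K_{i+1}=K_i+2i+10$.
\item For $K_i\le s\le K_i+i$ one has $Q_{2^s}\in[Q_{2^{K_i}},2Q_{2^{K_i}}]$. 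Hence the quantity in \eqref{div2} is at least $(i+1)/2$ at $|n|=K_i+i$.
\item Near $s=K_{i+1}$ the term $\epsilon_i2^s$ dominates, so $\rho_s\ge 2-O(2^{-i})$.
\item Finally $\epsilon_{i+1}\le 2^{-i}\epsilon_i$, so $q$ is non-increasing.
\end{itemize}
The ratio depends only on $s$, and every pair $(|n|,n)$ lies in $\Delta_\omega$. So \eqref{eqq2} is equivalent to $\rho_s\to1$ for every choice of $\omega$, and for such $q$ Theorem~\ref{divergence}(2) simply does not apply. This is why the statement stresses that no regularity hypothesis is needed.

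\textbf{The missing idea is to work along a subsequence instead of with a ratio limit.} Your first bullet, read contrapositively, is the right ingredient and is essentially Proposition~\ref{prop2}. Suppose that for some $m$ one had $Q_{2^{n-m}}<\frac12 Q_{2^n}$ for all large $n$. Then $\sum_{s\le n}Q_{2^s}\le C+2mQ_{2^n}$, contradicting \eqref{div2}. Hence there are $c>0$, $n_j\uparrow\infty$ and $\gamma_j\uparrow\infty$ with $Q_{2^{n_j-\gamma_j}}\ge cQ_{2^{n_j}}$.

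The paper does not invoke Theorem~\ref{divergence}(2) as stated. It reruns the divergence construction (the block polynomials of Theorem~\ref{conj1}(b) feeding Lemma~\ref{l1}) only at scales $a=n_j$, with $2\eta_a\le\gamma_j$. Since $|n_a(x)|\le a-1$ and the N\"orlund weights satisfy $\Omega_s(n)\asymp Q_{2^s}/Q_{2^{|n|}}$, one gets $\Omega_{a-2\eta_a}(n_a(x))\gtrsim Q_{2^{n_j-\gamma_j}}/Q_{2^{n_j}}\ge c$. That is exactly the lower bound \eqref{block-lower} the block estimate needs.
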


\section{Proof of Theorem \protect\ref{conj1}}

\begin{proof}[Proof of Theorem~\protect\ref{conj1}(a)]
We begin with the representation  
\begin{equation*}
M_{n_a}(\boldsymbol{\Omega})f  = \sum_{k=0}^{\infty} 
\varepsilon_k(n_a)\,\Omega_k(n_a)\,\Delta_k(f w_{n_a})  =
\sum_{k=0}^{\infty}  \varepsilon_k(n_a)\,\Omega_k(n_a)  \bigl(E_{k+1}(f
w_{n_a}) - E_k(f w_{n_a})\bigr),  
\end{equation*}
where $E_k$ denotes conditional expectation with respect to the $k$-th
dyadic  $\sigma$-algebra and we use the convention $\varepsilon_{-1}(n_a)=0$%
,  $\Omega_{-1}(n_a)=0$.

Applying summation by parts, we obtain  
\begin{align*}
M_{n_a}(\boldsymbol{\Omega})f &= -\sum_{k=0}^{\infty} \bigl(%
\varepsilon_k(n_a)\,\Omega_k(n_a) - \varepsilon_{k-1}(n_a)\,\Omega_{k-1}(n_a)%
\bigr) E_k(f w_{n_a}) \\
&= \sum_{k=1}^{\infty} \bigl(\varepsilon_{k-1}(n_a) - \varepsilon_k(n_a)%
\bigr) \Omega_{k-1}(n_a)\,E_k(f w_{n_a}) \\
&\phantom{=} - \sum_{k=0}^{\infty} \varepsilon_k(n_a)\,\bigl(\Omega_k(n_a) -
\Omega_{k-1}(n_a)\bigr) E_k(f w_{n_a}).
\end{align*}

Define the dyadic martingale maximal operator 
\begin{equation*}
E^{\ast }(g):=\sup_{k\in \mathbb{N}}|E_{k}\left( g\right) |.
\end{equation*}%
It is well known (see, e.g., \cite{SWS}) that 
\begin{equation}
\Vert E^{\ast }\Vert _{L_{1}(\mathbb{I})\rightarrow L_{1,\infty }(\mathbb{I}%
)}<\infty .  \label{ww}
\end{equation}%
Since $|w_{n_{a}}|=1$, we have 
\begin{equation*}
|E_{k}(fw_{n_{a}})|\leq E^{\ast }(|f|)\quad \text{pointwise for all }k.
\end{equation*}

Using the monotonicity of the weights in $k$ and $|\varepsilon
_{k}(n_{a})|\leq 1$, we deduce 
\begin{align*}
|M_{n_{a}}(\boldsymbol{\Omega })f|& \leq E^{\ast }(|f|)\sum_{k=1}^{\infty }%
\bigl|\varepsilon _{k-1}(n_{a})-\varepsilon _{k}(n_{a})\bigr|\Omega
_{k-1}(n_{a}) \\
& \phantom{\le}\;+\;E^{\ast }(|f|)\sum_{k=0}^{\infty }\bigl(\Omega
_{k}(n_{a})-\Omega _{k-1}(n_{a})\bigr) \\
& =E^{\ast }(|f|)\sum_{k=1}^{|n_{a}|}\bigl|\varepsilon
_{k-1}(n_{a})-\varepsilon _{k}(n_{a})\bigr|\Omega _{k-1}(n_{a})\;+\;E^{\ast
}(|f|)\,\Omega _{|n_{a}|}(n_{a}).
\end{align*}

Taking the supremum over $a\in \mathbb{N}$, we obtain 
\begin{equation*}
W_{C}(\boldsymbol{\Omega },\{n_{a}\})f\leq E^{\ast }(|f|)\sup_{a\in \mathbb{N%
}}\sum_{k=1}^{|n_{a}|}\bigl|\varepsilon _{k-1}(n_{a})-\varepsilon _{k}(n_{a})%
\bigr|\Omega _{k-1}(n_{a})\;+\;E^{\ast }(|f|)\sup_{a\in \mathbb{N}}\Omega
_{|n_{a}|}(n_{a}).
\end{equation*}

By the uniform summability condition \eqref{cond} and the top-weight bound 
(see condition~\eqref{<1}, i.e.\ $\sup_{n}\Omega_{|n|}(n)<\infty$), both 
suprema on the right-hand side are finite. Hence  
\begin{equation*}
W_C(\boldsymbol{\Omega},\{n_a\})f \;\lesssim\; E^*(|f|)  \quad\text{pointwise%
},  
\end{equation*}
with an implied constant independent of $f$.

Combining this pointwise domination with the weak type $(1,1)$ estimate  %
\eqref{ww} for $E^*$ yields  
\begin{equation*}
\|W_C(\boldsymbol{\Omega},\{n_a\})f\|_{L_{1,\infty}(\mathbb{I})}  \lesssim
\|E^*(|f|)\|_{L_{1,\infty}(\mathbb{I})}  \lesssim \|f\|_{L_1(\mathbb{I})},  
\end{equation*}
which is exactly \eqref{weeek}. This completes the proof of part~\textup{(a)}%
.

Proof of Theorem~\ref{conj1}(b). Let
\begin{equation*}
\delta:=\sup_{n\in\mathbb P}\frac{\gamma(n)}{|n|}<1.
\end{equation*}
For each sufficiently large integer $a$, put
\begin{equation*}
G_a:=\inf\{\gamma(n):n\in\mathbb P,\ 2^{a/2}\le n<2^a\},
\qquad
\eta_a:=\min\left\{\left\lfloor\frac{a}{8}\right\rfloor,
\left\lfloor\frac{G_a}{2}\right\rfloor\right\}.
\end{equation*}
By Definition~\ref{dee}, $G_a\to\infty$, and hence $\eta_a\to\infty$.
Moreover, $2\eta_a<a/4$ for large $a$.

We shall use a block construction of length $\eta_a$. For a binary block
$\varepsilon_{a-2\eta_a},\dots,\varepsilon_{a-\eta_a-1}\in\{0,1\}$ set
\begin{equation*}
\lambda_a:=\sum_{j=0}^{\eta_a-1}\varepsilon_{a-2\eta_a+j}2^{a-2\eta_a+j},
\qquad
n_a:=\lambda_a+2^{\eta_a}\lambda_a.
\end{equation*}
Thus the same block of $\eta_a$ digits appears twice in the binary expansion
of $n_a$, once in the positions $a-2\eta_a,\dots,a-\eta_a-1$ and once in the
positions $a-\eta_a,\dots,a-1$.

Define the Walsh polynomial
\begin{equation}
W_a(t):=\frac{1}{\sqrt{\eta_a}}\Biggl(\prod_{j=0}^{\eta_a-1}
\bigl(1+r_{a-2\eta_a+j}(t)r_{a-\eta_a+j}(t)\bigr)\Biggr)
\Biggl(\sum_{l=a-2\eta_a}^{a-\eta_a-1}r_l(t)\Biggr). \label{W1-revised}
\end{equation}
Exactly as in the estimate above, the product in \eqref{W1-revised} is
supported on the dyadic set on which the two blocks of coordinates coincide,
and the Cauchy--Schwarz inequality gives
\begin{equation}
\|W_a\|_1\le 1. \label{W1-revised-L1}
\end{equation}

Let
\begin{equation*}
E_a:=\{x\in\mathbb I:x_{a-\eta_a}\dotplus x_{a-2\eta_a-1}=1\}.
\end{equation*}
Then $|E_a|=1/2$. For each fixed $x\in E_a$ choose the block digits as follows:
if
\begin{equation*}
\sum_{k=a-2\eta_a}^{a-\eta_a-1}x_k\ge \frac{\eta_a}{3},
\end{equation*}
put $\varepsilon_k=x_k$ on this block; otherwise put
$\varepsilon_k=1-x_k$. With this choice, at least a fixed positive proportion
of the terms in the block have the same sign. The corresponding integer will
be denoted by $n_a(x)$.

Because the chosen block contains at least one non-zero digit, for all large
$a$ we have
\begin{equation*}
2^{a-2\eta_a}\le n_a(x)<2^a.
\end{equation*}
Hence $\gamma(n_a(x))\ge G_a\ge 2\eta_a$. Since $|n_a(x)|\le a-1$, it follows
that
\begin{equation*}
|n_a(x)|-\gamma(n_a(x))\le a-1-2\eta_a<a-2\eta_a.
\end{equation*}
By the monotonicity of $k\mapsto\Omega_k(n)$ and by Definition~\ref{dee},
\begin{equation}
\Omega_{a-2\eta_a}(n_a(x))
\ge \Omega_{|n_a(x)|-\gamma(n_a(x))}(n_a(x))\ge c. \label{block-lower}
\end{equation}

The same convolution and orthogonality calculation as in the preceding block
construction gives, for $x\in E_a$,
\begin{equation}
M_{n_a(x)}(\boldsymbol{\Omega})W_a(x)
=\frac{w_{n_a(x)}(x)}{\sqrt{\eta_a}}
\sum_{k=a-2\eta_a}^{a-\eta_a-1}\varepsilon_k\Omega_k(n_a(x))r_k(x).
\label{block-orth}
\end{equation}
Indeed, the second copy of the digit block is killed by the defining
condition of $E_a$, while on the support of $W_a$ one has $w_{n_a(x)}=1$.
Using the choice of the digits, the monotonicity of the weights, and
\eqref{block-lower}, we obtain
\begin{equation}
|M_{n_a(x)}(\boldsymbol{\Omega})W_a(x)|
\gtrsim \frac{1}{\sqrt{\eta_a}}\sum_{a-2\eta_a\le k\le a-(5/3)\eta_a-1}
\Omega_k(n_a(x))
\gtrsim \sqrt{\eta_a},\qquad x\in E_a. \label{block-low}
\end{equation}
Since the maximal operator takes the supremum over all $n$, the dependence
of $n_a(x)$ on $x$ is harmless. Taking
$\lambda=c_0\sqrt{\eta_a}$ with $c_0>0$ sufficiently small, we get
\begin{equation*}
\|W_C(\boldsymbol{\Omega})W_a\|_{1,\infty}
\ge \lambda\,|\{x\in E_a:|M_{n_a(x)}(\boldsymbol{\Omega})W_a(x)|>\lambda\}|
\gtrsim \sqrt{\eta_a}.
\end{equation*}
Together with \eqref{W1-revised-L1} and $\eta_a\to\infty$, this gives
\begin{equation*}
\sup_a\frac{\|W_C(\boldsymbol{\Omega})W_a\|_{1,\infty}}{\|W_a\|_1}=\infty.
\end{equation*}
Thus $W_C(\boldsymbol{\Omega})$ is not of weak type~$(1,1)$. Theorem~\ref{conj1}
is proved.
\end{proof}

\section{Proof of Theorem \protect\ref{T2}}

\begin{proof}[Proof of Theorem~\ref{T2}]
We write $m:=|n|$. It is useful to introduce the width of the cone by
\begin{equation*}
h_\Delta(n):=
\begin{cases}
\omega_n, & \Delta=\Delta_\omega,\\[2mm]
\left\lfloor \dfrac{1-\kappa}{2}|n|\right\rfloor, & \Delta=\Delta_\kappa.
\end{cases}
\end{equation*}
Then $h_\Delta(n)\to\infty$ and $h_\Delta(n)/|n|$ is bounded away from one.
Moreover, if $1\le j\le h_\Delta(n)$, then all indices
$m-j+1,\dots,m$ belong to the cone $\Delta$ for all sufficiently large $|n|$.

Assume first that \eqref{sim} holds. For $1\le k\le m$ set
\begin{equation*}
\theta_k(n):=\frac{\Omega_k(n)-\Omega_{k-1}(n)}{\Omega_{k-1}(n)}
=\frac{\Omega_k(n)}{\Omega_{k-1}(n)}-1\ge0.
\end{equation*}
For large $n$ define $\gamma(n)$ to be the largest integer
$1\le j\le h_\Delta(n)$ such that
\begin{equation*}
\max_{m-j+1\le k\le m}\theta_k(n)\le \frac1j.
\end{equation*}
For the finitely many remaining values of $n$, define $\gamma(n):=1$. The
triangular limit \eqref{sim} implies that for every fixed $J$ and all
sufficiently large $|n|$,
\begin{equation*}
\max_{m-J+1\le k\le m}\theta_k(n)\le \frac1J,
\end{equation*}
and also $J\le h_\Delta(n)$. Hence $\gamma(n)\ge J$ for all large $|n|$.
Thus
\begin{equation*}
\lim_{N\to\infty}\inf\{\gamma(n): |n|\ge N\}=\infty.
\end{equation*}
Furthermore, $\gamma(n)\le h_\Delta(n)$, so
$\sup_n\gamma(n)/|n|<1$.

By telescoping,
\begin{equation*}
\frac{\Omega_m(n)}{\Omega_{m-\gamma(n)}(n)}
=\prod_{t=1}^{\gamma(n)}\bigl(1+\theta_{m-t+1}(n)\bigr).
\end{equation*}
The definition of $\gamma(n)$ and the inequality $\log(1+x)\le x$ give
\begin{equation*}
\log\frac{\Omega_m(n)}{\Omega_{m-\gamma(n)}(n)}
\le \sum_{t=1}^{\gamma(n)}\theta_{m-t+1}(n)\le 1.
\end{equation*}
Using the lower bound in the admissibility condition \eqref{<1}, we obtain
\begin{equation*}
\Omega_{|n|-\gamma(n)}(n)\ge e^{-1}\Omega_{|n|}(n)\ge c_0>0.
\end{equation*}
Therefore $\gamma$ is a sequence of divergence in the sense of
Definition~\ref{dee}.

Conversely, assume that a sequence of divergence exists. Since the ratio
limit in the definition of $\Delta$-admissibility exists and the weights are
nondecreasing in $k$, failure of \eqref{sim} means that
\begin{equation*}
\lim_{(k,n)_\Delta\to\infty}\frac{\Omega_k(n)}{\Omega_{k-1}(n)}=L>1.
\end{equation*}
Choose $\beta$ with $1<\beta<L$. For all sufficiently large $|n|$ and all
indices in the last $h_\Delta(n)$ levels of the cone, we have
\begin{equation*}
\frac{\Omega_k(n)}{\Omega_{k-1}(n)}\ge \beta.
\end{equation*}
Put
\begin{equation*}
j(n):=\min\{\gamma(n),h_\Delta(n)\}.
\end{equation*}
By Definition~\ref{dee} and by $h_\Delta(n)\to\infty$, we have
$j(n)\to\infty$ uniformly as $|n|\to\infty$. Hence
\begin{equation*}
\frac{\Omega_{|n|}(n)}{\Omega_{|n|-j(n)}(n)}
=\prod_{l=|n|-j(n)+1}^{|n|}\frac{\Omega_l(n)}{\Omega_{l-1}(n)}
\ge \beta^{j(n)}.
\end{equation*}
On the other hand, since $j(n)\le\gamma(n)$, monotonicity and
Definition~\ref{dee} imply
\begin{equation*}
\Omega_{|n|-j(n)}(n)\ge \Omega_{|n|-\gamma(n)}(n)\ge c,
\end{equation*}
while \eqref{<1} gives $\Omega_{|n|}(n)\le C$. Thus the last quotient is
bounded by $C/c$, contradicting $\beta^{j(n)}\to\infty$. Therefore the limit
must be equal to one.
\end{proof}

\section{\protect\bigskip Proof of Theorem \protect\ref{T3}}

\begin{proof}
We first prove that condition \eqref{kapa} implies \eqref{div22}. By Theorem~%
\ref{T2}, condition \eqref{kapa} yields the existence of a sequence of
divergence $\gamma = (\gamma(n))_{n\ge1}$ with respect to $\bigl(\Omega_k(n)%
\bigr)_{k,n\ge1}$. By definition of a sequence of divergence, we have 
\begin{equation*}
\sup_{n\in\mathbb{N}} \frac{\gamma(n)}{|n|} < 1 \quad\text{and}\quad
\Omega_{|n|-\gamma(n)}(n) \ge c > 0 \quad\text{for all } n\in\mathbb{N}.
\end{equation*}
Using the monotonicity in $k$ (i.e. $\Omega_{k-1}(n) \le \Omega_k(n)$), we
obtain for each $n$: 
\begin{equation*}
\sum_{k=0}^{|n|} \Omega_k(n) \;\ge\; \sum_{k=|n|-\gamma(n)}^{|n|}
\Omega_k(n) \;\ge\; \gamma(n)\,\Omega_{|n|-\gamma(n)}(n) \;\gtrsim\;
\gamma(n),
\end{equation*}
and since $\gamma(n)$ tends to infinity uniformly as $|n|\to\infty$, it follows that 
\begin{equation*}
\sup_{n\in\mathbb{N}} \sum_{k=0}^{|n|} \Omega_k(n) = \infty,
\end{equation*}
that is, \eqref{div22} holds.

\medskip

Next, we prove the converse: \eqref{div22} implies \eqref{kapa}. Assume that %
\eqref{div22} holds, but \eqref{kapa} fails. Then, along the cone $\Delta
_{\kappa }$, we have 
\begin{equation*}
\lim_{(k,n)_{\Delta _{\kappa }}\rightarrow \infty }\frac{\Omega _{k}(n)}{%
\Omega _{k-1}(n)}=L>1.
\end{equation*}%
Hence there exists a number $\beta $ with $1<\beta <L$ and an integer $%
N(\beta )$ such that 
\begin{equation}
\frac{\Omega _{k}(n)}{\Omega _{k-1}(n)}\geq \beta ,\qquad |n|>N(\beta ),\
(k,n)\in \Delta _{\kappa }.  \label{eq:beta-growth}
\end{equation}%
Equivalently, 
\begin{equation*}
\Omega _{k}(n)\geq \beta \,\Omega _{k-1}(n)\quad \text{for all }|n|>N(\beta
),\ \kappa |n|\leq k\leq |n|.
\end{equation*}

Fix $n>N(\beta )$ and any $m$ with $\kappa |n|\leq m\leq |n|$. Iterating %
\eqref{eq:beta-growth} from $k=m+1$ up to $k=|n|$ yields 
\begin{equation*}
\Omega _{|n|}(n)\;\geq \;\beta ^{|n|-m}\,\Omega _{m}(n),\quad \text{so that}%
\quad \Omega _{m}(n)\;\leq \;\beta ^{-(|n|-m)}\,\Omega _{|n|}(n).
\end{equation*}%
By the uniform bound on the top weights i.e. $\Omega _{|n|}(n)\lesssim 1$,
we obtain 
\begin{equation*}
\Omega _{m}(n)\;\lesssim \;\beta ^{-(|n|-m)},\qquad \kappa |n|\leq m\leq |n|.
\end{equation*}%
Consequently, 
\begin{equation*}
\sum_{m=\kappa |n|}^{|n|}\Omega _{m}(n)\;\lesssim \;\sum_{m=\kappa
|n|}^{|n|}\beta ^{-(|n|-m)}\;\;\lesssim \;1,
\end{equation*}%
with an implicit constant independent of $n$.

On the other hand, by monotonicity in $k$ we have, for $k<\kappa|n|$, 
\begin{equation*}
\Omega_k(n) \le \Omega_{\kappa|n|}(n),
\end{equation*}
so that 
\begin{equation*}
\sum_{k=0}^{\kappa|n|-1} \Omega_k(n) \;\le\;
\kappa|n|\,\Omega_{\kappa|n|}(n) \;\le\; \frac{\kappa}{1-\kappa}
\sum_{m=\kappa|n|}^{|n|} \Omega_m(n) \;\lesssim\; 1.
\end{equation*}
Therefore, 
\begin{equation*}
\sum_{k=0}^{|n|} \Omega_k(n) = \sum_{k=0}^{\kappa|n|-1} \Omega_k(n)
+\sum_{m=\kappa|n|}^{|n|} \Omega_m(n) \;\lesssim\; 1
\end{equation*}
for all sufficiently large $n$, which contradicts \eqref{div22}. This proves
the equivalence between \eqref{kapa} and \eqref{div22}.

\medskip

Finally, assume that $\{\Omega_k(n)\}$ is $\Delta_\omega$-admissible and that \eqref{omega} holds. The implication \eqref{omega} $\Rightarrow$ \eqref{div22} follows from Theorem~\ref{T2} and the preceding argument. To see that \eqref{omega} is not necessary, we construct a
family of weights $\{\Omega_k(n)\}_{k,n\ge1}$ such that 
\begin{align}
&\Omega_{k-1}(n) \le \Omega_k(n) \quad\text{for all } n\in\mathbb{N},\ k\ge1,
\label{zrd} \\
&\Omega_{|n|}(n) \sim 1 \quad\text{for all } n,  \label{eq1} \\
&\lim_{(k,n)_{\Delta_\omega}\to\infty} \frac{\Omega_k(n)}{\Omega_{k-1}(n)} =
L > 1,  \label{l}
\end{align}
and yet \eqref{div22} holds.

Fix a constant $L>1$ and define 
\begin{equation*}
\omega_n := \biggl\lfloor \frac{1}{2}\log_L(|n|+1)\biggr\rfloor.
\end{equation*}
For each $n$, define the weights $\Omega_k(n)$ for $0\le k\le |n|$ by 
\begin{equation*}
\Omega_k(n) := 
\begin{cases}
L^{-\omega_n}, & 0 \le k \le |n|-\omega_n, \\[4pt] 
L^{k-|n|}, & |n|-\omega_n < k \le |n|.%
\end{cases}%
\end{equation*}
Then $\Omega_k(n)$ is constant on the "early part" and increases
geometrically on the "tail part". It is immediate from the definition that %
\eqref{zrd} holds, and that 
\begin{equation*}
\Omega_{|n|}(n) = L^{|n|-|n|} = 1,
\end{equation*}
so \eqref{eq1} is satisfied. Moreover, for $|n|-\omega_n < k \le |n|$ we
have 
\begin{equation*}
\frac{\Omega_k(n)}{\Omega_{k-1}(n)} = \frac{L^{k-|n|}}{L^{k-1-|n|}} = L,
\end{equation*}
and the cone $\Delta_\omega$ lies entirely inside the "tail part" for $|n|$
large enough (since $|n|-\omega_n < k \le |n|$ exactly describes the cone
determined by $\omega$). Hence 
\begin{equation*}
\lim_{(k,n)_{\Delta_\omega}\to\infty} \frac{\Omega_k(n)}{\Omega_{k-1}(n)} =
L > 1,
\end{equation*}
so \eqref{omega} fails, but \eqref{l} holds.

We now show that \eqref{div22} nevertheless holds for this system. Set 
\begin{equation*}
\rho (n):=\sum_{k=0}^{|n|}\Omega _{k}(n)=\sum_{k=0}^{|n|-\omega _{n}}\Omega
_{k}(n)+\sum_{k=|n|-\omega _{n}+1}^{|n|}\Omega _{k}(n)=:\rho _{1}(n)+\rho
_{2}(n).
\end{equation*}%
For the tail part, 
\begin{equation*}
\rho _{2}(n)=\sum_{k=|n|-\omega _{n}+1}^{|n|}L^{k-|n|}=\sum_{j=0}^{\omega
_{n}-1}L^{-j}\leq \sum_{j=0}^{\infty }L^{-j}\lesssim 1.
\end{equation*}%
For the early part, 
\begin{equation*}
\rho _{1}(n)=\sum_{k=0}^{|n|-\omega _{n}}L^{-\omega _{n}}=L^{-\omega
_{n}}(|n|-\omega _{n}+1).
\end{equation*}%
By the choice of $\omega _{n}$, 
\begin{equation*}
\omega _{n}=\Bigl\lfloor\tfrac{1}{2}\log _{L}(|n|+1)\Bigr\rfloor\leq \tfrac{1%
}{2}\log _{L}(|n|+1),
\end{equation*}%
so 
\begin{equation*}
L^{-\omega _{n}}\geq L^{-\frac{1}{2}\log _{L}(|n|+1)}=(|n|+1)^{-1/2}.
\end{equation*}%
Thus 
\begin{equation*}
\rho _{1}(n)=L^{-\omega _{n}}(|n|-\omega _{n}+1)\gtrsim \frac{|n|-\omega
_{n}+1}{(|n|+1)^{1/2}}\gtrsim (|n|+1)^{1/2}\rightarrow \infty \quad \text{as 
}n\rightarrow \infty .
\end{equation*}%
Hence $\rho (n)=\rho _{1}(n)+\rho _{2}(n)\rightarrow \infty $, and therefore 
\begin{equation*}
\sup_{n\in \mathbb{N}}\sum_{k=0}^{|n|}\Omega _{k}(n)=\sup_{n\in \mathbb{N}%
}\rho (n)=\infty ,
\end{equation*}%
which is exactly \eqref{div22}.

This shows that \eqref{omega} is not necessary for \eqref{div22}, and completes the proof of Theorem~\ref{T3}.
\end{proof}

\section{Proof of Theorem \protect\ref{divergence}}

In this section we apply Theorem~\ref{conj1} to study the weak-type~$(1,1)$
boundedness of maximal operators associated with sequences of operators
generated by the partial sums of the Walsh--Fourier series. These sequences
arise naturally from matrix transformations of the Walsh--Fourier partial
sums and include, as particular cases, many classical summability methods
such as partial sums, Fejér means, Ces\`aro means, Riesz means, logarithmic
means and Nörlund means, among others. Throughout this section we restrict
our attention to the case in which the entries of the transformation matrix
form a non-decreasing sequence along each fixed row.

Below, we assume that $\left\{ a_{k}\right\} $ and $\left\{ b_{k}\right\} $
are two sequences satisfying the following properties:%
\begin{equation}
b_{k}+1<a_{k}-2\gamma \left( a_{k}\right) <a_{k}<\frac{b_{k+1}}{2}-1,
\label{s1}
\end{equation}%
\begin{equation}
a_{k}>3a_{k-1},  \label{s2}
\end{equation}%
\begin{equation}
\gamma \left( a_{k}\right) >k^{8}2^{4b_{k}},  \label{s3}
\end{equation}

\begin{equation}
\gamma \left( a_{k}\right) >\gamma \left( a_{k-1}\right) .  \label{s4}
\end{equation}

We note that the choice of $a_{k}$ does not depend on $b_{k}$ and $a_{k}$
can be chosen arbitrarily large. This argument will be used below in the
proofs of the lemmas, where the sequence $\left\{ a_{k}\right\} $ will be
subject to additional restrictions.

\begin{lemma}
\label{l1}Assume that the ratio condition \eqref{eqq2} holds and that $\left\{
a_{k}\right\} $ and the sequence $\left\{ \gamma (a_{k})\right\} $ satisfy
properties (\ref{s1})-(\ref{s3}). Then for each $k$ there is a Walsh
polynomial $W_{a_{k}}^{1}$, a measurable set $E^{1}\subset \mathbb{I}$ with $%
|E^{1}|=1$, and a function 
\begin{equation*}
n_{a_{k}}(\cdot ):E^{1}\rightarrow \mathbb{N}
\end{equation*}%
such that

\begin{enumerate}
\item $\func{sp}(W_{a_{k}}^{1})\subset \lbrack 2^{a_{k}-2\gamma \left(
a_{k}\right) },2^{a_{k}})\cap \mathbb{N}$;

\item $2^{a_{k}-2\gamma \left( a_{k}\right) }\leq n_{a_{k}}<2^{a_{k}}$ for
all $x\in E^{1}$;

\item $\displaystyle\sum_{k=1}^{\infty }\Vert W_{a_{k}}^{1}\Vert _{1}<\infty 
$;

\item for every $x\in E^{1}$ there exist infinitely many $k$ such that 
\begin{equation*}
\bigl|W_{a_{k}}^{1}\ast V_{n_{a_{k}}}^{\mathbb{T}}(x)\bigr|\gtrsim \sqrt[4]{%
\gamma (a_{k})}.
\end{equation*}
\end{enumerate}
\end{lemma}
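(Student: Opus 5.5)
We adapt the block construction from the proof of Theorem~\ref{conj1}(b) to the kernels $V_n^{\mathbb T}$, and then add a Borel--Cantelli-type independence argument across the scales $a_k$.

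\textbf{Step 1: the weights and the sequence $\gamma$.} Via the kernel decomposition of Appendix~A, the convolution $W\ast V_n^{\mathbb T}$ with a Walsh polynomial $W$ is a weighted martingale transform. For $W$ spectrally supported in $[2^{a_k-2\gamma(a_k)},2^{a_k})$, it has weights $\Omega_s(n)=\widetilde T_{2^s,n}$ (up to the factor $w_n$ and a harmless remainder).

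Condition \eqref{eqq2} together with Theorem~\ref{T2} (in its $\Delta_\omega$ version) gives a sequence of divergence $\gamma$. We therefore have $\widetilde T_{2^{|n|-\gamma(n)},n}\ge c$, and by row monotonicity $\widetilde T_{2^s,n}\ge c$ for all $s\ge |n|-\gamma(n)$.

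\textbf{Step 2: the polynomial $W^1_{a_k}$.} Put $\eta_k:=\gamma(a_k)$. Instead of the normalisation $\eta^{-1/2}$ used in \eqref{W1-revised}, take
\begin{equation*}
W^1_{a_k}(t):=\frac{1}{k^2\sqrt{\eta_k}}\prod_{j=0}^{\eta_k-1}\bigl(1+r_{a_k-2\eta_k+j}(t)r_{a_k-\eta_k+j}(t)\bigr)\sum_{l=a_k-2\eta_k}^{a_k-\eta_k-1}r_l(t).
\end{equation*}
This makes $\|W^1_{a_k}\|_1\le k^{-2}$, which gives property (3). Expanding the product shows that the spectrum consists of indices whose top digits lie in positions $a_k-2\eta_k,\dots,a_k-1$, which gives property (1).

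\textbf{Step 3: the index $n_{a_k}(x)$ and the lower bound.} For $x$ in the set $E_{a_k}=\{x_{a_k-\eta_k}\dotplus x_{a_k-2\eta_k-1}=1\}$, choose $n_{a_k}(x)$ by duplicating the digit block, chosen as $x_l$ or $1-x_l$ exactly as in the proof of Theorem~\ref{conj1}(b). This gives property (2). The orthogonality identity \eqref{block-orth} and the estimate \eqref{block-low} then yield
\begin{equation*}
|W^1_{a_k}\ast V^{\mathbb T}_{n_{a_k}}(x)|\gtrsim k^{-2}\sqrt{\eta_k}\ge \eta_k^{1/4}.
\end{equation*}
The last inequality holds because \eqref{s3} forces $\eta_k>k^8$, so $k^{-2}\ge\eta_k^{-1/4}$.

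\textbf{Step 4: the full-measure set $E^1$.} By \eqref{s1}, successive blocks occupy disjoint coordinate ranges, since $a_{k-1}<b_k<a_k-2\gamma(a_k)$. The events $E_{a_k}$ depend only on coordinates in $(b_k,a_k)$, so they are independent events of measure $1/2$. The second Borel--Cantelli lemma then shows that $E^1:=\limsup_k E_{a_k}$ has full measure. Property (4) follows, with $n_{a_k}(x)$ defined arbitrarily when $x\notin E_{a_k}$.

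\textbf{Main obstacle.} The hardest part is Step~3 in the actual matrix setting. Here one must check that the difference between $V_n^{\mathbb T}$ and the pure martingale transform with weights $\widetilde T_{2^s,n}$ (the remainder terms from Appendix~A, involving the lower digits of $n$ below $a_k-2\eta_k$) is negligible on the support of $W^1_{a_k}$.

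The plan for this is as follows. Since $W^1_{a_k}$ has no spectrum below $2^{a_k-2\eta_k}$, these lower terms act as conditional expectations $E_{a_k-2\eta_k}$ applied to $W^1_{a_k}w_n$, which vanish. Any leftover piece should be bounded by $\|W^1_{a_k}\|_\infty 2^{-(a_k-2\eta_k)}$-type quantities. These are controlled by the growth conditions \eqref{s1}--\eqref{s3}, in particular the factor $2^{4b_k}$, which absorbs the finitely many lower digits.
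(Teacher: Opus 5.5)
Your normalisation $k^{-2}\eta_k^{-1/2}$ (instead of the paper's division of $W_{a_k}$ by $\gamma(a_k)^{1/4}$) is fine. Your choice of $n_{a_k}(x)$ and the independence/Borel--Cantelli argument for $\limsup_k E_{a_k}$ also match the paper. The genuine gap is the step you flag as the main obstacle, and your proposed resolution does not work.

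The identities \eqref{block-orth}--\eqref{block-low} only describe the martingale-transform part $V^{\mathbb T}_{n,3}$ of the decomposition \eqref{V=V1+V2+V3}. The other parts, $V^{\mathbb T}_{n,1}$ and $V^{\mathbb T}_{n,2}$, are sums over \emph{all} $s\le |n|$ with $\varepsilon_s(n)=1$ of Fej\'er-type terms $w_{n(s)}w_{2^s-1}\,kK_k$ and $w_{n(s)}w_{2^s-1}(2^s-1)K_{2^s-1}$. These are not lower-digit corrections. Your $n_{a_k}(x)$ has no binary digits below $a_k-2\eta_k$ at all, so every surviving term sits at a block scale $a_k-2\eta_k\le s<a_k$. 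Each such term has spectrum in $[2^s,2^{s+1})$, which meets the spectrum of $W^1_{a_k}$. Nothing reduces to $E_{a_k-2\eta_k}(W^1_{a_k}w_n)=0$, and these contributions need not vanish or be small pointwise. A pointwise bound through $\|W^1_{a_k}\|_\infty=2^{\eta_k}\sqrt{\eta_k}/k^2$ is useless. The factor $2^{4b_k}$ in \eqref{s3} has a different job: in the proof of Theorem~\ref{divergence} it makes $\gamma(a_k)^{1/4}$ dominate the terms $2^{b_k}$ and $k2^{a_{k-1}}$ coming from the other polynomials.

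The missing idea is to control $V^{\mathbb T}_{n,1}+V^{\mathbb T}_{n,2}$ in measure, not pointwise. Use the weak type $(1,1)$ estimate \eqref{eq:weak11} for the maximal operator $\sup_n|f\ast(w_nV^{\mathbb T}_{n,1}+w_nV^{\mathbb T}_{n,2})|$ from \cite{GogNagyMathematics}; the supremum over $n$ is essential because $n_{a_k}(x)$ depends on $x$. Let $B_{a_k}$ be the set where this remainder, applied to $W^1_{a_k}$, exceeds half of the main term $c_0k^{-2}\sqrt{\eta_k}$. Then $|B_{a_k}|\lesssim k^{-2}/(k^{-2}\sqrt{\eta_k})=\eta_k^{-1/2}<k^{-4}$ by \eqref{s3}. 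The first Borel--Cantelli lemma gives $|\limsup_k B_{a_k}|=0$. The correct full-measure set is therefore $E^1:=\limsup_k E_{a_k}\setminus\limsup_k B_{a_k}$, not $\limsup_k E_{a_k}$. For $x\in E^1$ there are infinitely many $k$ with $x\in E_{a_k}\setminus B_{a_k}$, and for those $k$ the triangle inequality yields (4). Without this exceptional-set step, property (4) is simply not established.

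A smaller point: for \eqref{block-lower} to hold at $n_{a_k}(x)$, $\eta_k$ must be the block length of Theorem~\ref{conj1}(b). That is, it must satisfy $2\eta_k\le\inf\{\gamma(n):2^{a_k/2}\le n<2^{a_k}\}$ and $\eta_k\le a_k/8$. This is what the paper's $\gamma(a_k)$ stands for, not literally the divergence sequence evaluated at the integer $a_k$.
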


\begin{proof}[Proof of Lemma \protect\ref{l1}]
By \eqref{eqq2} and the normalization of the matrix rows, the weights
$\Omega_s(n):=\widetilde T_{2^s,n}$ satisfy the admissibility hypotheses on
the cone $\Delta_\omega$ used in Theorem~\ref{T2}. Hence Theorem~\ref{T2}
gives a divergence scale for these weights. Applying the block construction
from the proof of Theorem~\ref{conj1}(b) to the weights
$\Omega_s(n)=\widetilde T_{2^s,n}$, we obtain the following objects:

\begin{enumerate}
	\item an increasing sequence $\{\gamma(a)\}_{a\in\mathbb{N}}\subset\mathbb{N}$
	with
	\[
	\sup_{a\in\mathbb{N}} \gamma(a) = \infty;
	\]
	\item a sequence of Walsh polynomials $W_{a}$ with
	\[
	\deg W_{a} < 2^{a}, \qquad a\in\mathbb{N};
	\]
	\item for every $a\in\mathbb{N}$ and every $x$ in the set
	\[
	E_{a}
	:= \bigl\{ x\in\mathbb{I} :
	x_{a-\gamma(a)} \dotplus x_{a-2\gamma(a)-1} = 1
	\bigr\},
	\]
	where $\dotplus$ denotes addition modulo $2$, there exists an integer
	$n_{a}(x)$ with
	\[
	2^{a-2\gamma(a)} \le n_{a}(x) < 2^{a}
	\]
	such that
	\begin{equation}\label{eq:A-large}
		\bigl|W_{a} * w_{n_{a}(x)}\,V_{n_{a}(x),3}^{\mathbb{T}}(x)\bigr|
		\ge c_{0}\,\sqrt{\gamma(a)}.
	\end{equation}
\end{enumerate}

Moreover, as is proved in \cite{GogNagyMathematics}, the weak type $(1,1)$
estimates for the operators $V_{n,1}^{\mathbb{T}}$ and $V_{n,2}^{\mathbb{T}}$
imply that, for every $\lambda >0$, 
\begin{equation}
\Bigl|\Bigl\{x\in \mathbb{I}:\sup_{n}\bigl|W_{a}\ast (w_{n}V_{n,1}^{\mathbb{T%
}}+w_{n}V_{n,2}^{\mathbb{T}})(x)\bigr|>\lambda \Bigr\}\Bigr|\lesssim \frac{\Vert
W_{a}\Vert _{1}}{\lambda }.  \label{eq:weak11}
\end{equation}

\medskip \noindent \textbf{Step 1: A full--measure set of points with
infinitely many \textquotedblleft good\textquotedblright\ indices.}

We assume that the sequence $\left\{ a_{k}\right\} $ satisfies conditions (%
\ref{s1})-(\ref{s4}). Next, we show that for almost every $x\in \mathbb{I}$
there exist infinitely many integers $k$ such that 
\begin{equation}
\bigl|W_{a_{k}}\ast V_{n_{a_{k}}}^{\mathbb{T}}(x)\bigr|\geq \frac{c_{0}}{2}%
\sqrt{\gamma (a_{k})}.  \label{eq:inf}
\end{equation}

Set 
\begin{equation*}
E:=\bigcap_{n=1}^{\infty }\bigcup_{k\geq n}E_{a_{k}}=\limsup_{k\rightarrow
\infty }E_{a_{k}}.
\end{equation*}%
By construction $|E_{a_{k}}|=\frac{1}{2}$ for all $k$. Since the defining
coordinates of the sets $E_{a_{k}}$ are disjoint, the sets $(E_{a_{k}})$ are
independent; hence, by the Borel--Cantelli lemma, 
\begin{equation*}
|E|=1,
\end{equation*}%
that is, for almost every $x\in \mathbb{I}$ we have $x\in E_{a_{k}}$ for
infinitely many $k$ (the choice of $k$ depends on $x$) and hence inequality (%
\ref{eq:A-large}) holds for infinitely many $k$.

Now, we define: 
\begin{equation*}
B_{a_{k}}(x):=\bigl|W_{a_{k}}\ast (w_{n_{a_{k}}}V_{n_{a_{k}},1}^{\mathbb{T}%
}+w_{n_{a_{k}}}V_{n_{a_{k}},2}^{\mathbb{T}})(x)\bigr|,\qquad A_{a_{k}}(x):=%
\bigl|W_{a_{k}}\ast w_{n_{a_{k}}}V_{n_{a_{k}},3}^{\mathbb{T}}(x)\bigr|,
\end{equation*}%
and define 
\begin{equation*}
B_{a_{k}}:=\Bigl\{x\in \mathbb{I}:B_{a_{k}}(x)\geq \frac{c_{0}}{2}\sqrt{%
\gamma (a_{k})}\Bigr\}.
\end{equation*}%
Clearly, 
\begin{equation*}
B_{a_{k}}\subset \Bigl\{x\in \mathbb{I}:\sup_{k}B_{a_{k}}(x)>\frac{c_{0}}{2}%
\sqrt{\gamma (a_{k})}\Bigr\}.
\end{equation*}%
Applying the weak type $(1,1)$ estimate \eqref{eq:weak11} with $\lambda =%
\frac{c_{0}}{2}\sqrt{\gamma (a_{k})}$ gives 
\begin{equation*}
|B_{a_{k}}|\lesssim \frac{\Vert W_{a_{k}}\Vert _{1}}{\sqrt{\gamma (a_{k})}}.
\end{equation*}%
By the choice of $\gamma (a_{k})$ in (\ref{s3}) and the bound $\|W_{a_k}\|_1\le1$, we
may assume that $\sum_{k}|B_{a_{k}}|<\infty $. Another application of the
Borel--Cantelli lemma yields 
\begin{equation*}
B:=\limsup_{k\rightarrow \infty }B_{a_{k}}=\bigcap_{n=1}^{\infty
}\bigcup_{k\geq n}B_{a_{k}}
\end{equation*}%
has measure $|B|=0$. Equivalently, for almost every $x\in \mathbb{I}$ there
exists $N(x)$ such that 
\begin{equation}
B_{a_{k}}(x)<\frac{c_{0}}{2}\sqrt{\gamma (a_{k})}\qquad \text{for all }k\geq
N(x).  \label{eq:inv}
\end{equation}

Now define the full-measure set 
\begin{equation*}
E^{1}:=E\setminus B.
\end{equation*}%
Then $|E^{1}|=1$, and for each $x\in E^{1}$ we have:

\begin{itemize}
\item $x\in E_{a_{k}}$ for infinitely many integers $k$;

\item there exists $N(x)$ such that \eqref{eq:inv} holds for all $k\geq N(x)$%
.
\end{itemize}

Fix such an $x$ and choose $k\geq N(x)$ with $x\in E_{a_{k}}$; there are
infinitely many such $k$. For these indices we combine \eqref{eq:A-large}
and \eqref{eq:inv}. From (\ref{V=V1+V2+V3}) \ we have 
\begin{equation*}
\bigl|W_{a_{k}}\ast V_{n_{a_{k}}}^{\mathbb{T}}(x)\bigr|%
=|A_{a_{k}}(x)+B_{a_{k}}(x)|\geq |A_{a_{k}}(x)|-|B_{a_{k}}(x)|.
\end{equation*}%
By \eqref{eq:A-large} and \eqref{eq:inv} we obtain, for all sufficiently
large $k$ with $x\in E_{a_{k}}$, 
\begin{equation*}
|A_{a_{k}}(x)|\geq c_{0}\sqrt{\gamma (a_{k})}\quad \text{and}\quad
|B_{a_{k}}(x)|<\frac{c_{0}}{2}\sqrt{\gamma (a_{k})}.
\end{equation*}%
Thus 
\begin{equation}
\bigl|W_{a_{k}}\ast V_{n_{a_{k}}}^{\mathbb{T}}(x)\bigr|\geq c_{0}\sqrt{\gamma (a_k)}-\frac{c_{0}}{2}\sqrt{\gamma (a_{k})}=\frac{c_{0}}{2}\sqrt{\gamma
(a_{k})}  \label{inf}
\end{equation}%
for some absolute constant $c_{0}>0$, and for infinitely many $k$. This
proves \eqref{eq:inf} for almost every $x\in \mathbb{I}$.

\medskip \noindent \textbf{Step 2: Construction of the polynomials $%
W_{a_{k}}^{1}$.}

Let $(a_{k})$ be any strictly increasing sequence of indices with conditions
(\ref{s1})-(\ref{s4}) such that for each $x\in E^{1}$ the inequality %
\eqref{eq:inf} holds for infinitely many $k$ (such a sequence exists by the
previous step). For each $k\in \mathbb{N}$ define 
\begin{equation*}
W_{a_{k}}^{1}(x):=\frac{W_{a_{k}}\left( x\right) }{\sqrt[4]{\gamma (a_{k})}},
\end{equation*}%
Then it is easy to see that%
\begin{equation*}
\text{sp}\left( W_{a_{k}}^{1}\right) \subset \left[ 2^{a_{k}-2\gamma \left(
a_{k}\right) },2^{a_{k}}\right) \cap \mathbb{N}
\end{equation*}%
and%
\begin{equation*}
\sum\limits_{k}\left\Vert W_{a_{k}}^{1}\right\Vert \leq \sum\limits_{k}\frac{%
1}{\sqrt[4]{\gamma \left( a_{k}\right) }}\leq \sum\limits_{k}\frac{1}{k^{2}}%
<\infty .
\end{equation*}%
Using inequality (\ref{inf}) we get%
\begin{equation*}
\left\vert \mathcal{T}_{n_{a_{k}}}^{\mathbb{T}}\left( W_{a_{k}}^{1},x\right)
\right\vert =\frac{\left\vert \mathcal{T}_{n_{a_{k}}}^{\mathbb{T}}\left(
W_{a_{k}},x\right) \right\vert }{\sqrt[4]{\gamma \left( a_{k}\right) }}%
\gtrsim \sqrt[4]{\gamma \left( a_{k}\right) }\text{ \ \ (for infinitely many
integers }k,\text{ depending on }x\text{).}
\end{equation*}

This completes the proof of Lemma \ref{l1}.
\end{proof}

\begin{lemma}
\label{l2} Let $E^{0}\subset \mathbb{I}$ be a measurable set with $|E^{0}| =
0$, and let $\{\varepsilon_{j} : j\in\mathbb{N}\}$ be a sequence of positive
numbers such that $\varepsilon_{j}\to 0$ as $j\to\infty$. Then there exists
an increasing sequence $\{b_{j} : j\in\mathbb{N}\}$ with $%
b_{j}\uparrow\infty $ as $j\to\infty$. Moreover, the choice of $b_{j}$
depends on $\varepsilon_{j}$ and can be made arbitrarily large by taking $%
\varepsilon_{j} $ sufficiently small.

In addition, there exists a sequence $\{A_{j}:j\in \mathbb{N}\}$ of
measurable subsets of $\mathbb{I}$ and a sequence of Walsh polynomials $%
\{W_{b_{j}}^{0}:j\in \mathbb{N}\}$ such that:

\begin{enumerate}
\item $E^{0}\subset \bigcup_{j=1}^{\infty} A_{j}$;

\item each $x\in E^{0}$ belongs to infinitely many of the sets $A_{j}$;

\item $|A_{j}| < \varepsilon_{j}$ for all $j\in\mathbb{N}$;

\item $\func{sp}(W_{b_{j}}^{0})\subset \lbrack 2^{b_{j}},2^{b_{j}+1})$ for
all $j\in \mathbb{N}$;

\item for all $x\in A_{j},\ j\in \mathbb{N};$ 
\begin{equation*}
\bigl|\mathcal{T}_{2^{b_{j}+2}}^{\mathbb{T}}(W_{b_{j}}^{0};x)\bigr|\geq
2^{b_{j}/2};\qquad 
\end{equation*}

\item 
\begin{equation*}
\sum_{j=0}^{\infty }\Vert W_{b_{j}}^{0}\Vert _{1}<\infty .
\end{equation*}
\end{enumerate}
\end{lemma}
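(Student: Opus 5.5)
The plan is a Stein-type covering argument for the null set $E^{0}$, built from polynomials living in a single dyadic frequency block $[2^{b},2^{b+1})$. The basic building block is the normalized Dirichlet-type polynomial
\[
W_{b,I}^{0}(t):=2^{-b/2}\,w_{2^{b}}(t)\,D_{2^{b}}(t\dotplus u_I).
\]
Here $I$ is a dyadic interval of length $2^{-b}$ and $u_I$ is its left endpoint. Its spectrum is contained in $[2^{b},2^{b+1})$, and
\[
\|W_{b,I}^{0}\|_{1}=2^{-b/2}\|D_{2^{b}}\|_{1}=2^{-b/2}.
\]

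The first step is to compute $\mathcal{T}_{2^{b+2}}^{\mathbb{T}}(W_{b,I}^{0};x)$. Since $W_{b,I}^{0}$ has spectrum in $[2^{b},2^{b+1})$, we have $S_k W=0$ for $k\le 2^{b}$, $S_kW=W$ for $k\ge 2^{b+1}$, and a partial block otherwise. Hence
\[
\mathcal{T}_{2^{b+2}}^{\mathbb{T}}W=\Bigl(\sum_{k\ge 2^{b+1}}t_{k,2^{b+2}}\Bigr)W+\sum_{2^{b}<k<2^{b+1}}t_{k,2^{b+2}}S_kW .
\]
By rowwise monotonicity and $\sum_k t_{k,n}=1$, the first coefficient is at least $1/2$: it is the mass of the upper three quarters of the row, and that mass dominates. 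For the middle term I will use the Abel-transform/kernel decomposition of Appendix~A. It should give a bound by $C\cdot E^{*}$-type quantities, or, more concretely, pointwise on $I$ a bound of size $\lesssim t_{2^{b+1},2^{b+2}}\,2^{b}\cdot 2^{-b/2}$. I will try to show this is at most $\tfrac14|W|$ there.

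Since $|W_{b,I}^{0}|=2^{b/2}$ on $I$, this should yield
\[
|\mathcal{T}^{\mathbb{T}}_{2^{b+2}}W|\ge c\,2^{b/2}\quad\text{on } I.
\]
To get the exact constant $2^{b/2}$ I may multiply $W$ by a fixed constant, which is harmless. The main obstacle is controlling the contribution of the partial sums $S_k$ with $2^{b}<k<2^{b+1}$. If the direct bound fails, I will instead take $W=2^{-b/2}w_{2^{b}}\,D_{2^{b}}(\cdot\dotplus u_I)$ restricted via a Rademacher factor so that $S_kW$ equals $W$ on $I$ times a dyadic indicator, and use positivity of $D_{2^{j}}$ to show the partial-sum terms all have the same sign on $I$. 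Then no cancellation occurs and the whole sum is $\ge \tfrac12|W|$ pointwise there.

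The second step is the covering. Given $\varepsilon_j$, cover $E^{0}$ by a finite or countable union of dyadic intervals of total measure $<\varepsilon_j$. To keep polynomials finite, I first fix an open cover of measure $<\varepsilon_j$ and write it as a disjoint union of dyadic intervals $I_{j,1},I_{j,2},\dots$. Then, grouping intervals by length, I use a diagonal enumeration so that at stage $j$ only one length $2^{-b_j}$ appears, with finitely many intervals. Concretely, I refine all intervals to a common length $2^{-b_j}$ and define
\[
A_j:=\bigcup_i I_{j,i},\qquad W_{b_j}^{0}:=\sum_i W_{b_j,I_{j,i}}^{0}.
\]
Because the intervals are disjoint and the kernels are supported on their own intervals, $|W_{b_j}^{0}|=2^{b_j/2}$ on $A_j$ and
\[
\|W_{b_j}^{0}\|_1=2^{-b_j/2}\cdot\#\{i\}\cdot 2^{-b_j}\cdot 2^{b_j}=2^{b_j/2}|A_j|.
\]
Choosing $\varepsilon_j\le 2^{-b_j}j^{-2}$ then gives $\|W_{b_j}^{0}\|_1\le 2^{-b_j/2}j^{-2}$, which proves (6). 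Making $b_j$ as large as desired while $\varepsilon_j\to0$ gives (3) and (4). The countable-versus-finite issue is handled by splitting the countable cover of stage $j$ across infinitely many indices $j'$, each carrying one interval length, with the $\varepsilon$'s summable.

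For properties (1) and (2), I will repeat the cover for each $\varepsilon_j$ along a subsequence. Every $x\in E^{0}$ lies in each cover, so it lies in infinitely many $A_j$. Finally, the localization of the pointwise estimate to $A_j$ gives (5), because the estimate is computed on each $I$ separately and the supports are disjoint.
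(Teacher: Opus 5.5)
\noindent The decisive gap is property (6). Your building block is just $W^0_{b,I}=2^{b/2}r_b\mathds 1_I$, and your formula $\|W^0_{b_j}\|_1=2^{b_j/2}|A_j|$ is correct. The choice $\varepsilon_j\le 2^{-b_j}j^{-2}$, however, cannot be realized. $A_j$ is a nonempty union of dyadic intervals of length $2^{-b_j}$, so $|A_j|\ge 2^{-b_j}$, which contradicts $|A_j|<\varepsilon_j$. The choice is also circular, since $b_j$ comes from refining the cover whose size $\varepsilon_j$ controls.

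What your construction actually gives is $\sum_j\|W^0_{b_j}\|_1=\sum_I|I|^{1/2}$, summed over all intervals used. A family of dyadic intervals covering every point of $E^0$ infinitely often with $\sum_I|I|^{1/2}<\infty$ forces $\mathcal H^{1/2}(E^0)=0$, because the tails of the family have arbitrarily small $1/2$-dimensional content. This fails for many null sets, e.g.\ the triadic Cantor set.

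A cleverer polynomial does not help. Anything with spectrum in $[2^{b},2^{b+1})$ is constant on dyadic intervals of length $2^{-b-1}$. For the partial-sum matrix $t_{k,n}=\mathds 1_{\{n\}}(k)$, which meets all the standing assumptions, $\mathcal T^{\mathbb T}_{2^{b+2}}$ is the identity on that band. So (2), (4), (5), (6) together already force $\mathcal H^{1/2}(E^0)=0$, and the statement as written cannot be proved for arbitrary null $E^0$.

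The amplitude in (5) has to be decoupled from $b_j$. What can be proved is $|\mathcal T^{\mathbb T}_{2^{b_j+2}}(W^0_{b_j};x)|\ge\alpha_j$ on $A_j$ for a prescribed $\alpha_j$, with $\varepsilon_j$ chosen afterwards so that $\sum_j\alpha_j\varepsilon_j^{1/2}<\infty$. That is what the paper's construction really delivers, with $\alpha_j=2^{a_j/2}$ and $\varepsilon_j=2^{-2a_j}$. Its final step $2^{a_j/2}\ge 2^{b_j/2}$ does not hold either, because $2^{-b_j}\le|A_j|<2^{-2a_j}$ gives $b_j>2a_j$.

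\medskip

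\noindent Your Step 1 is also incomplete, and the closing localization claim is false. For a single interval your fallback idea works. For $x\in I$ and $0\le m\le 2^{b}$ one has $S_{2^{b}+m}W^0_{b,I}(x)=\frac{m}{2^{b}}\,W^0_{b,I}(x)$, so every term has the sign of $W^0_{b,I}(x)$ and $|\mathcal T^{\mathbb T}_{2^{b+2}}W^0_{b,I}|\ge\tfrac12\cdot 2^{b/2}$ on $I$.

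But $S_k$ with $2^{b}<k<2^{b+1}$ is not local. For a union of intervals, at $x\in I$ you pick up the extra terms $r_b(x)\,2^{-b/2}\sum_{I'\ne I}D_m(x\dotplus u_{I'})$, whose signs are indefinite. Disjointness of the supports does not remove them.

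The paper avoids this with an idea your plan lacks. $\mathcal T^{\mathbb T}_{n}$ multiplies $w_l$ by $T^{(l+1)}_{n}$. For $n=2^{b_j+2}$ and $2^{b_j}\le l<2^{b_j+1}$, these factors are bounded below by an absolute constant, by rowwise monotonicity. Dividing the Walsh coefficients of $\widetilde W^0_{b_j}=\alpha_j r_{b_j}\mathds 1_{A_j}$ by them gives $W^0_{b_j}$ with $\mathcal T^{\mathbb T}_{2^{b_j+2}}W^0_{b_j}=\widetilde W^0_{b_j}$ exactly. This holds for any finite union $A_j$ of dyadic intervals of length at least $2^{-b_j}$, and Parseval gives $\|W^0_{b_j}\|_1\lesssim\alpha_j|A_j|^{1/2}$.
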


\begin{proof}[Proof of Lemma \protect\ref{l2}]
The existence of a family $\{A_{j}\}$ satisfying properties (1)-(3) is a
standard consequence of the fact that $|E^{0}|=0$ (see, for example, \cite%
{SWS}). More precisely, there exists a sequence of dyadic intervals $%
\{I_{k}\}_{k\ge 1}$ such that 
\begin{equation*}
E^{0} \subset \bigcup_{k=1}^{\infty} I_{k}, \qquad \sum_{k=1}^{\infty}
|I_{k}| < 1,
\end{equation*}
and each $x\in E^{0}$ belongs to infinitely many of the intervals $I_{k}$.
We may represent each $A_{j}$ as a finite union of these intervals: 
\begin{equation*}
A_{j} := \bigcup_{k=n_{j}}^{n_{j+1}-1} I_{k},
\end{equation*}
for a suitably chosen increasing sequence of indices $\{n_{j}\}$.

Since $\sum_{k=1}^{\infty }|I_{k}|<1$, we can choose $n_{1}<n_{2}<\cdots $
such that 
\begin{equation*}
|A_{j}|=\sum_{k=n_{j}}^{n_{j+1}-1}|I_{k}|\leq \sum_{k=n_{j}}^{\infty
}|I_{k}|<\varepsilon _{j},\qquad j\in \mathbb{N}.
\end{equation*}%
This yields property~(3). Property~(1) follows from $E^{0}\subset
\bigcup_{k}I_{k}$ and the fact that $\{n_{j}\}$ partitions the index set $%
\mathbb{N}$ into disjoint blocks, while property~(2) follows from the
assumption that each $x\in E^{0}$ belongs to infinitely many of the
intervals $I_{k}$.

Next we define the sequence $\{b_{j}\}$. Set 
\begin{equation}
2^{b_{j}} := \max\Bigl\{\frac{1}{|I_{k}|} : n_{j}\le k<n_{j+1}\Bigr\}.
\label{bj}
\end{equation}
Since $|I_{k}|\to 0$ as $k\to\infty$ (because $\sum_{k} |I_{k}|<\infty$),
equality~\eqref{bj} implies that $b_{j}\to\infty$ as $j\to\infty$. Moreover,
by refining the choice of the indices $\{n_{j}\}$ (equivalently, by taking $%
\varepsilon_{j}$ smaller), we can make $b_{j}$ arbitrarily large, which
establishes the claimed dependence of $b_{j}$ on $\varepsilon_{j}$.

\medskip

We now turn to the construction of the polynomials $W_{j}^{0}$. Define 
\begin{equation*}
\widetilde{W}_{b_{j}}^{0}(x):=\alpha _{j}\,r_{b_{j}}(x)\,\mathds{1}%
_{A_{j}}(x)=\sum_{k=2^{b_{j}}}^{2^{b_{j}+1}-1}C_{k}^{(j)}w_{k}(x),
\end{equation*}%
where $\alpha _{j}>0$ will be chosen later, $r_{b_{j}}$ denotes the $b_{j}$%
-th Rademacher function, and $w_{k}$ is the $k$-th Walsh function. Thus $%
\widetilde{W}_{j}^{0}$ is a Walsh polynomial, and its Walsh spectrum is
contained in $[2^{b_{j}},2^{b_{j}+1})$.

We define the desired polynomial by 
\begin{equation*}
W_{b_{j}}^{0}(x):=\sum_{k=2^{b_{j}}}^{2^{b_{j}+1}-1}\frac{C_{k}^{(j)}}{%
T_{2^{b_{j}+2}}^{\left( k+1\right) }}\,w_{k}(x).
\end{equation*}

The denominator is uniformly bounded below. Indeed, since each row of
$\mathbb{T}$ is nondecreasing and has sum one, the mass of the last
$n-k$ terms is at least $(n-k)/(n+1)$. With $n=2^{b_j+2}$ and
$2^{b_j}\le k<2^{b_j+1}$, this gives
\begin{equation*}
T_{2^{b_j+2}}^{(k+1)}=\sum_{l=k+1}^{2^{b_j+2}}t_{l,2^{b_j+2}}
\ge \frac{2^{b_j+2}-k}{2^{b_j+2}+1}\ge c>0,
\end{equation*}
where $c$ is an absolute constant independent of $j$ and $k$. Hence 
\begin{equation*}
\Bigl\|W_{b_{j}}^{0}\Bigr\|_{2}^{2}=\sum_{k=2^{b_{j}}}^{2^{b_{j}+1}-1}\Bigl(%
\frac{C_{k}^{(j)}}{T_{2^{b_{j}+2}}^{\left( k+1\right) }}\Bigr)^{2}\lesssim
\sum_{k=2^{b_{j}}}^{2^{b_{j}+1}-1}\bigl(C_{k}^{(j)}\bigr)^{2}=\Bigl\|%
\widetilde{W}_{b_{j}}^{0}\Bigr\|_{2}^{2}.
\end{equation*}%
By Parseval's identity and the definition of $\widetilde{W}_{b_{j}}^{0}$, 
\begin{equation*}
\Bigl\|\widetilde{W}_{b_{j}}^{0}\Bigr\|_{2}^{2}=\alpha _{j}^{2}|A_{j}|,
\end{equation*}%
so that 
\begin{equation*}
\Vert W_{b_{j}}^{0}\Vert _{1}\leq \Vert W_{b_{j}}^{0}\Vert _{2}\lesssim
\alpha _{j}\,|A_{j}|^{1/2}.
\end{equation*}

The sequence $\{a_{j}\}$ of positive integers was constructed according to
conditions (\ref{s1})-(\ref{s3}), and we define 
\begin{equation*}
\varepsilon _{j}:=2^{-2a_{j}},\qquad \alpha _{j}:=2^{a_{j}/2}
\end{equation*}
and hence 
\begin{equation*}
\alpha _{j}\,|A_{j}|^{1/2}\leq 2^{a_{j}/2}\cdot 2^{-a_{j}}=2^{-a_{j}/2}.
\end{equation*}%
Thus 
\begin{equation*}
\Vert W_{b_{j}}^{0}\Vert _{1}\leq \Vert W_{b_{j}}^{0}\Vert _{2}\lesssim
2^{-a_{j}/2}
\end{equation*}%
and 
\begin{equation*}
\sum_{j=0}^{\infty }\Vert W_{b_{j}}^{0}\Vert _{1}<\infty ,
\end{equation*}%
which proves property~(6).

It remains to verify property~(5). Using the definition of $\mathcal{T}_{n}^{%
\mathbb{T}}$ and the fact that $\mathcal{T}_{n}^{\mathbb{T}}$ acts
diagonally on Walsh polynomials, we have 
\begin{align*}
\mathcal{T}_{2^{b_{j}+2}}^{\mathbb{T}}(W_{b_{j}}^{0};x)&
=\sum_{k=0}^{2^{b_{j}+2}}t_{k,2^{b_{j}+2}}\,S_{k}(W_{b_{j}}^{0};x) \\
& =\sum_{l=2^{b_{j}}}^{2^{b_{j}+1}-1}\frac{C_{l}^{(j)}}{T_{2^{b_{j}+2}}^{%
\left( l+1\right) }}\sum_{k=0}^{2^{b_{j}+2}}t_{k,2^{b_{j}+2}}S_{k}(w_{l};x).
\end{align*}%
Since $S_{k}(w_{l};x)=0$ for $k\leq l$ and $S_{k}(w_{l};x)=w_{l}(x)$ for $k>l
$, we obtain 
\begin{equation*}
\sum_{k=0}^{2^{b_{j}+2}}t_{k,2^{b_{j}+2}}S_{k}(w_{l};x)=\Bigl(%
\sum_{k=l+1}^{2^{b_{j}+2}}t_{k,2^{b_{j}+2}}\Bigr)w_{l}(x)=T_{2^{b_{j}+2}}^{%
\left( l+1\right) }\,w_{l}(x).
\end{equation*}%
Hence 
\begin{align*}
\mathcal{T}_{2^{b_{j}+2}}^{\mathbb{T}}(W_{j}^{0};x)&
=\sum_{l=2^{b_{j}}}^{2^{b_{j}+1}-1}\frac{C_{l}^{(j)}}{T_{2^{b_{j}+2}}^{%
\left( l+1\right) }}T_{2^{b_{j}+2}}^{\left( l+1\right) }\,w_{l}(x) \\
& =\sum_{l=2^{b_{j}}}^{2^{b_{j}+1}-1}C_{l}^{(j)}w_{l}(x)=\widetilde{W}%
_{b_{j}}^{0}(x)=\alpha _{j}\,r_{b_{j}}(x)\,\mathds{1}_{A_{j}}(x) \\
& =2^{a_{j}/2}r_{b_{j}}(x)\,\mathds{1}_{A_{j}}(x).
\end{align*}%
In particular, for $x\in A_{j}$ we have $|r_{b_{j}}(x)|=1$, and therefore 
\begin{equation*}
\bigl|\mathcal{T}_{2^{b_{j}+2}}^{\mathbb{T}}(W_{b_{j}}^{0};x)\bigr|%
=2^{a_{j}/2}\geq 2^{b_{j}/2},\qquad x\in A_{j},
\end{equation*}%
which is property~(5).

This completes the proof of Lemma~\ref{l2}.
\end{proof}

\begin{proof}[Proof of Theorem \protect\ref{divergence}]
We now construct a function $f_{0}$ as follows%
\begin{equation*}
f_{0}\left( x\right) :=\sum\limits_{k=0}^{\infty }\left( W_{b_{k}}^{0}\left(
x\right) +W_{a_{k}}^{1}\left( x\right) \right) .
\end{equation*}%
Using Lemma \ref{l1} and Lemma \ref{l2} we conclude that $f_{0}\in
L_{1}\left( \mathbb{I}\right) $. Now, we assume that $x$ is arbitrary from $%
\mathbb{I}$. Then since $\mathbb{I}=E^{0}\cup E^{1}$ we have that $x$
belongs either to $E^{0}$ or to $E^{1}$. First, we assume that $x\in E^{1}$. Then
for infinitely many $k$ inequality (\ref{eq:inf}) holds. By Lemmas \ref{l1}
and \ref{l2}, it follows that 
\begin{eqnarray*}
\mathcal{T}_{n_{a_{k}}}^{\mathbb{T}}\left( f_{0},x\right) 
&=&\sum\limits_{j=0}^{n_{a_{k}}}t_{j,n_{a_{k}}}S_{j}\left( f_{0},x\right)  \\
&=&\sum\limits_{l=0}^{k}\sum\limits_{j=0}^{n_{a_{k}}}t_{j,n_{a_{k}}}S_{j}%
\left( W_{b_{l}}^{0}+W_{a_{l}}^{1},x\right)  \\
&=&\sum\limits_{l=0}^{k-1}\sum%
\limits_{j=2^{b_{l}}}^{n_{a_{k}}}t_{j,n_{a_{k}}}S_{j}\left(
W_{b_{l}}^{0}+W_{a_{l}}^{1},x\right)
+\sum\limits_{j=0}^{n_{a_{k}}}t_{j,n_{a_{k}}}S_{j}\left(
W_{b_{k}}^{0},x\right)
+\sum\limits_{j=0}^{n_{a_{k}}}t_{j,n_{a_{k}}}S_{j}\left(
W_{a_{k}}^{1},x\right) .
\end{eqnarray*}%
Consequently, by (\ref{s3})%
\begin{eqnarray*}
\left\vert \mathcal{T}_{n_{a_{k}}}^{\mathbb{T}}\left( f_{0},x\right)
\right\vert  &\gtrsim &\mathcal{T}_{n_{a_{k}}}^{\mathbb{T}}\left(
W_{a_{k}}^{1},x\right) -2^{b_{k}}-k2^{a_{k-1}} \\
&\gtrsim &\sqrt[4]{\gamma \left( a_{k}\right) }-2^{b_{k}}-k2^{a_{k-1}}%
\gtrsim \sqrt[4]{\gamma \left( a_{k}\right) }\text{,}
\end{eqnarray*}%
for infinitely many integers $k$. \newline
Suppose that $x\in E^{0}$. Then $x$ belongs to $A_{k}$ for
infinitely many integers $k$. Consequently, we can write 
\begin{eqnarray*}
\mathcal{T}_{2^{b_{k}+2}}^{\mathbb{T}}\left( f_{0},x\right) 
&=&\sum\limits_{j=0}^{2^{b_{k}+2}}t_{j,2^{b_{k}+2}}S_{j}\left(
f_{0},x\right)  \\
&=&\sum\limits_{j=0}^{2^{b_{k}+2}}t_{j,2^{b_{k}+2}}S_{j}\left(
W_{b_{k}}^{0},x\right)
+\sum\limits_{l=0}^{k-1}\sum%
\limits_{j=0}^{2^{b_{k}+2}}t_{j,2^{b_{k}+2}}S_{j}\left(
W_{a_{l}}^{1},x\right)  \\
&=&\mathcal{T}_{2^{b_{k}+2}}^{\mathbb{T}}\left( W_{b_{k}}^{0},x\right)
+\sum\limits_{l=0}^{k-1}\sum%
\limits_{j=2^{a_{l}}}^{2^{b_{k}+2}}t_{j,2^{b_{k}+2}}S_{2^{a_{l}}}\left(
W_{a_{l}}^{1},x\right) .
\end{eqnarray*}%
Consequently, from (\ref{s1}) and Lemma \ref{l2}, we have%
\begin{equation*}
\left\vert \mathcal{T}_{2^{b_{k}+2}}^{\mathbb{T}}\left( f_{0},x\right)
\right\vert \gtrsim 2^{b_{k}/2}-2^{a_{k-1}}\gtrsim 2^{a_{k-1}}.
\end{equation*}%
This completes the proof of Theorem \ref{divergence}.
\end{proof}

\section{Proof of Theorem \protect\ref{VP-W}}

Define the matrix $\mathbb{V} = \{t_{k,n}\}$ by 
\begin{equation*}
t_{k,n} := 
\begin{cases}
0, & k = 0,\dots,n-\lambda_n-1, \\[4pt] 
\dfrac{1}{\lambda_n+1}, & k = n-\lambda_n,\dots,n,%
\end{cases}
\qquad n\in\mathbb{N}. 
\end{equation*}
Then the corresponding matrix means $\mathcal{T}_n^{\mathbb{V}}$ coincide
with the de la Vallée Poussin means $V_n^{(\lambda)}$ in \eqref{VP}.

For this matrix one checks (using the definition of $\widetilde{T}_{2^s,n}$)
that 
\begin{equation}  \label{eq:VP-Ttilde}
\widetilde{T}_{2^{s},n} \lesssim 
\begin{cases}
\dfrac{2^{s}}{\lambda_{n}+1}, & s = 0,1,\dots,|\lambda_{n}|, \\[6pt] 
\dfrac{\lambda_{n}}{\lambda_{n}+1}, & s = |\lambda_{n}|+1,\dots,|n|.%
\end{cases}%
\end{equation}

\subsection*{Proof of part \textup{(a)}}

Assume that condition \eqref{leid} holds, i.e. 
\begin{equation*}
n = O(\lambda_n) \quad (n\to\infty). 
\end{equation*}
Summing the bounds in \eqref{eq:VP-Ttilde}, we obtain 
\begin{align*}
\sum_{s=0}^{|n|} \widetilde{T}_{2^{s},n} &\lesssim \frac{1}{\lambda_n+1}
\sum_{s=0}^{|\lambda_n|} 2^{s} \;+\; \sum_{s=|\lambda_n|+1}^{|n|} \frac{%
\lambda_n}{\lambda_n+1} \\
&\lesssim \frac{2^{|\lambda_n|+1}}{\lambda_n+1} \;+\; (|n|-|\lambda_n|)\,%
\frac{\lambda_n}{\lambda_n+1}.
\end{align*}
Since $2^{|\lambda_n|} \simeq \lambda_n$ and $\lambda_n/(\lambda_n+1)\le 1$,
this yields 
\begin{equation*}
\sum_{s=0}^{|n|} \widetilde{T}_{2^{s},n} \lesssim 1 + |n|-|\lambda_n|. 
\end{equation*}
Moreover, $|n|\simeq \log_2 n$ and $|\lambda_n|\simeq \log_2 \lambda_n$, so 
\begin{equation*}
|n| - |\lambda_n| \lesssim \log\frac{n}{\lambda_n}. 
\end{equation*}
By \eqref{leid}, there exists $C\ge1$ such that $n/\lambda_n \le C$ for all
sufficiently large $n$, and hence 
\begin{equation*}
\sup_{n\in\mathbb{N}} \sum_{s=0}^{|n|} \widetilde{T}_{2^{s},n} < \infty. 
\end{equation*}

Thus the uniform boundedness condition in Theorem~\ref{divergence}\,(1) is
satisfied for the matrix $\mathbb{V}$. Consequently, for every $f\in L_{1}(%
\mathbb{I})$ one has 
\begin{equation*}
\mathcal{T}_{n}^{\mathbb{V}}(f;x) = V_{n}^{(\lambda)}(f;x) \longrightarrow
f(x) \quad \text{as } n\to\infty, 
\end{equation*}
for almost every $x\in\mathbb{I}$. This proves part~\textup{(a)} of Theorem~%
\ref{VP-W}.

\subsection*{Proof of part \textup{(b)}}

Now assume that 
\begin{equation}
\frac{n}{\lambda _{n}}\longrightarrow \infty \quad (n\rightarrow \infty ),
\label{div55-again}
\end{equation}
Set
\begin{equation*}
\omega_n:=\left\lfloor\frac{|n|-|\lambda_n|}{2}\right\rfloor .
\end{equation*}
By \eqref{div55-again}, we have $\omega_n\to\infty$ and
$\omega_n/|n|\to0$. Moreover, if $(s,n)\in\Delta_\omega$, then, for all
large $n$, $s>|\lambda_n|$; hence the second line of \eqref{eq:VP-Ttilde}
applies
\begin{equation*}
\lim_{(s,n)_{\Delta _{\omega }}\rightarrow \infty }\frac{\widetilde{T}%
_{2^{s},n}}{\widetilde{T}_{2^{s-1},n}}=1.
\end{equation*}%
In other words, the triangular ratio condition \eqref{eqq2} of Theorem~\ref%
{divergence}\thinspace (2) is satisfied in this case.

Therefore, by Theorem~\ref{divergence}\,(2) there exists an integrable
function $f\in L_{1}(\mathbb{I})$ such that 
\begin{equation*}
\limsup_{n\to\infty} \bigl|\mathcal{T}_{n}^{\mathbb{V}}(f;x)\bigr|
= \infty 
\end{equation*}
at every point $x\in\mathbb{I}$. Since $\mathcal{T}_{n}^{\mathbb{V}%
}(f;x)=V_{n}^{(\lambda)}(f;x)$, this is precisely the statement of part~%
\textup{(b)} of Theorem~\ref{VP-W}.

The proof of Theorem~\ref{VP-W} is complete. \qedhere

\section{Proof of Theorem \protect\ref{T-Norl}}

We now observe that the triangular limit condition %
\eqref{limmE2} is not required in the special case when
the weights $\Omega _{k}(n)$ originate from a single nondecreasing
sequence $\{Q_{2^{k}}\}_{k\geq 0}$. Indeed, suppose that for every $n\in 
\mathbb{N}$ we define 
\begin{equation}
\Omega _{k}(n):=\frac{Q_{2^{k}}}{Q_{2^{|n|}}},\qquad 0\leq k\leq |n|,
\label{eq:OmegaQ}
\end{equation}%
where the sequence $\{Q_{2^{k}}\}_{k\geq 0}$ satisfies 
\begin{equation*}
0\leq Q_{2^{k}}\leq Q_{2^{k+1}}\qquad (k\geq 0).
\end{equation*}%
In order to prove Theorem \ref{T-Norl} by Theorems \ref{conj1} and %
\ref{divergence} it is sufficient to show that there exists a sequence of divergence $\left( \gamma _{j}\right) _{j=1}^{\infty }$.

\begin{prop}
\label{prop2} Let $a_{k}:=Q_{2^{k}}$ and $A_{k}:=\sum_{j=0}^{k} a_{j}$.
Assume that 
\begin{equation}  \label{div}
\sup_{n\ge 0}\frac{A_{n}}{a_{n}} = \infty .
\end{equation}
Then there are a constant $c\in(0,1)$, an increasing sequence $%
n_{j}\uparrow\infty$, and integers $\gamma_{j}\uparrow\infty$ such that 
\begin{equation*}
a_{\,n_{j}-\gamma_{j}} \;\ge\; c\,a_{\,n_{j}} \qquad (j\in\mathbb{N}).
\end{equation*}
\end{prop}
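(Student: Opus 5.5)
The plan is to prove the proposition with the explicit constant $c=\tfrac12$, by contraposition at each fixed scale. Throughout I use only two facts. First, $a_k=Q_{2^k}$ is nondecreasing. Second, $a_k>0$ for every $k$, because $Q_n>0$ for all $n$ by the standing hypothesis of Theorem~\ref{T-Norl}.

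\textbf{Step 1 (key claim).} For every fixed integer $G\ge1$ and every $N_0$, I will show there exists $n\ge N_0+G$ with
\[
a_{n-G}\ \ge\ \tfrac12\,a_n .
\]
Suppose instead that for some $G$ and some $N\ge G$ we have $a_{n-G}<\tfrac12 a_n$ for all $n\ge N$. Iterating gives $a_{n-mG}<2^{-m}a_n$ whenever $n-(m-1)G\ge N$.

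Fix $n\ge N$ and split $\{0,\dots,n\}$ into the blocks $(n-(m+1)G,\,n-mG]$ for $m=0,1,\dots$, stopping once the block reaches below $N$; what remains is the initial segment $[0,N]$. On the $m$-th block, monotonicity gives $a_k\le a_{n-mG}<2^{-m}a_n$, so that block contributes at most $G\,2^{-m}a_n$. Hence
\[
A_n\ \le\ A_{N}+2G\,a_n .
\]
Since $a_n\ge a_N>0$, we get $A_N/a_n\le A_N/a_N$, and therefore
\[
\frac{A_n}{a_n}\ \le\ \frac{A_N}{a_N}+2G \qquad (n\ge N).
\]
The finitely many $n<N$ have finite ratios because each $a_n>0$. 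So $\sup_n A_n/a_n<\infty$, which contradicts \eqref{div}.

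\textbf{Step 2 (diagonal selection).} Set $\gamma_j:=j$. Choose $n_1$ by Step~1 with $G=1$. Inductively, choose $n_j$ by Step~1 with $G=j$ and $N_0=n_{j-1}+1$. This gives $n_j>n_{j-1}$ and $n_j-\gamma_j\ge0$, so all indices are legitimate, and
\[
a_{n_j-\gamma_j}\ge\tfrac12 a_{n_j}\qquad (j\in\mathbb N),
\]
with $\gamma_j\uparrow\infty$ and $n_j\uparrow\infty$. This is the statement of the proposition with $c=\tfrac12$.

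The only delicate point is the bookkeeping in Step~1. The initial segment $A_N$ must be compared with $a_n$ through monotonicity, using $a_n\ge a_N>0$. The block decomposition must also handle a last, partial block correctly, which only loses a factor absorbed into the $2G$. Positivity of $Q_n$ is exactly what makes the finitely many small $n$ harmless. Once these are checked, the argument is a geometric-series estimate.
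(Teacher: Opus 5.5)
Your proof is correct and is essentially the paper's argument. You use the same contradiction hypothesis $a_{n-G}<\tfrac12 a_n$ for all large $n$, the same geometric bound $A_n\le A_N+2G\,a_n$ (your consecutive blocks of length $G$ just regroup the paper's split of $\{0,\dots,n\}$ into residue classes modulo $m$), and the same diagonal choice $\gamma_j=j$ with $c=\tfrac12$; you also make explicit the finitely many $n<N$ and the positivity $a_n\ge a_N>0$, which the paper leaves implicit.
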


\begin{proof}[Proof of Proposition \protect\ref{prop2}]
We first assume that 
\begin{equation}  \label{goal}
\forall m\in\mathbb{N},\ \text{there exist infinitely many }n \ \text{such
that }\ a_{n-m}\ge \tfrac12\,a_{n}.
\end{equation}
Fix $m\in\mathbb{N}$ and choose $n_{m}$ so large that $n_{1}<n_{2}<\cdots$
and $a_{n_{m}-m}\ge \tfrac12\,a_{n_{m}}$. Setting $\gamma_{m}:=m$, we obtain 
$\gamma_{m}\uparrow\infty$ and the desired estimate along the subsequence $%
\{n_{m}\}$.

\smallskip Next we show that \eqref{goal} must hold. Assume to the contrary
that it fails. Then its negation is

\begin{equation}  \label{goal-neg}
\exists\,m\in\mathbb{N} \ \text{such that only finitely many }n\ \text{%
satisfy}\ a_{n-m}\ge \tfrac12\,a_{n}.
\end{equation}

Thus there exist integers $m$ and $N$ such that 
\begin{equation}  \label{goal2}
\forall n>N:\quad a_{\,n-m} < \tfrac12\, a_{\,n}.
\end{equation}
We now show that \eqref{goal2} contradicts \eqref{div}.

\smallskip Decomposing the sum $A_{n}$ into $m$ arithmetic progressions
modulo $m$, for any $n$ we may write 
\begin{equation}  \label{An-decomp}
A_{n} \;\le\; C + \sum_{l=0}^{m-1}\ \sum_{0\le j\le (n-l)/m} a_{\,n-l-jm},
\end{equation}
where $C:=\sum_{k=0}^{N} a_{k}$.

Iterating \eqref{goal2}, for all $j\geq 0$ with $n-l-jm>N$ we have 
\begin{equation*}
a_{\,n-l-jm}\;\leq \;\left( \tfrac{1}{2}\right) ^{j}a_{\,n-l}\;\leq \;\left( 
\tfrac{1}{2}\right) ^{j}a_{n}.
\end{equation*}%
Substituting into \eqref{An-decomp} yields 
\begin{equation*}
A_{n}\;\leq \;C+a_{n}\sum_{l=0}^{m-1}\sum_{j=0}^{\infty }\left( \tfrac{1}{2}%
\right) ^{j}\;=\;C+2m\,a_{n}.
\end{equation*}%
Consequently, 
\begin{equation*}
\sup_{n}\frac{A_{n}}{a_{n}}\;\leq \;\frac{C}{a_{n}}+2m\;\leq \;\frac{C}{a_{N}%
}+2m\;<\infty ,
\end{equation*}%
contradicting \eqref{div}. Thus \eqref{goal-neg} and hence \eqref{goal2} are
impossible. Therefore \eqref{goal} holds, and the proposition follows.

\end{proof}

\begin{proof}[Completion of the proof of Theorem~\ref{T-Norl}]
Part~\textup{(a1)} is precisely the uniform boundedness criterion for
Nörlund means in the Walsh setting proved in~\cite{GogMukh2024}. Indeed, for
$\mathbb Q$ one has, up to absolute constants,
\begin{equation*}
\widetilde T_{2^s,n}=\frac{Q_{2^s}}{Q_{2^{|n|}}},\qquad 0\le s\le |n|,
\end{equation*}
and therefore \eqref{Q-cond} is equivalent to the uniform boundedness of the
corresponding $H_1(\mathbb I)\to L_1(\mathbb I)$ norms. The pointwise
convergence then follows from Theorem~\ref{divergence}\,\textup{(1)}.

Assume now that \eqref{div2} holds. With $a_s:=Q_{2^s}$ and
$A_m:=\sum_{s=0}^m a_s$, condition \eqref{div2} is exactly
\begin{equation*}
\sup_m\frac{A_m}{a_m}=\infty .
\end{equation*}
By Proposition~\ref{prop2}, there are $c>0$, integers $n_j\uparrow\infty$,
and $\gamma_j\uparrow\infty$ such that
\begin{equation*}
Q_{2^{n_j-\gamma_j}}\ge c Q_{2^{n_j}}.
\end{equation*}
Consequently, for the weights in \eqref{eq:OmegaQ},
\begin{equation*}
\Omega_{n_j-\gamma_j}(2^{n_j})=\frac{Q_{2^{n_j-\gamma_j}}}{Q_{2^{n_j}}}
\ge c.
\end{equation*}
This supplies the subsequential divergence blocks required in the proof of
Theorem~\ref{divergence}. Applying the construction there to the dyadic
subsequence $2^{n_j}$ yields an $f\in L_1(\mathbb I)$ for which the Nörlund
means $\mathcal T_n^{\mathbb Q}(f;x)$ diverge at every point
$x\in\mathbb I$. This proves part~\textup{(b1)}.
\end{proof}

\section{Appendix A: Background Material}

Let $\mathbb{P}:=\{1,2,\dots \}$, $\mathbb{N}:=\mathbb{P}\cup \{0\},\mathbb{I%
}:=[0,1)$.

We denote by$L_{1}\left( \mathbb{I}\right) $ the space of all (Lebesgue)
measurable functions $f:\mathbb{I}\rightarrow \mathbb{R}$ such that%
\begin{equation*}
\left\Vert f\right\Vert _{1}:=\int\limits_{\mathbb{I}}\left\vert
f\right\vert <\infty .
\end{equation*}%
The weak $L_{1}\left( \mathbb{I}\right) $ space, denoted by $L_{1,\infty
}\left( \mathbb{I}\right) ,$ consists of all measurable functions $f:\mathbb{%
I}\rightarrow \mathbb{R}$ for which%
\begin{equation*}
\left\Vert f\right\Vert _{1,\infty }:=\sup \lambda \left\vert \left\{
\left\vert f\right\vert >\lambda \right\} \right\vert <\infty .
\end{equation*}

Let $\{w_{n}:n\in \mathbb{N}\}$ denote the Walsh-Paley orthonormal system on 
$\mathbb{I}:=[0,1)$, and let $f\in L_{1}(\mathbb{I})$. We denote by $%
S_{n}(f;x)$ the $n$th partial sum of the Walsh--Fourier series of $f$, 
\begin{equation*}
S_{n}(f;x):=\sum_{k=0}^{n-1}\widehat{f}(k)\,w_{k}(x),\qquad n\in \mathbb{N}.
\end{equation*}

The operator $\mathcal{T}_{n}^{\mathbb{T}}$ can be written as a convolution
operator, 
\begin{equation*}
\mathcal{T}_{n}^{\mathbb{T}}(f;x) = (f \ast V_{n}^{\mathbb{T}})(x),
\end{equation*}
where 
\begin{equation*}
V_{n}^{\mathbb{T}}(u) := \sum_{k=1}^{n} t_{k,n} D_{k}(u), \qquad D_{k}(x) :=
\sum_{j=0}^{k-1} w_{j}(x)
\end{equation*}
is the $k$th Walsh--Dirichlet kernel. The corresponding weighted Lebesgue
function is defined by 
\begin{equation*}
\mathcal{L}_{n}^{\mathbb{T}} := \|V_{n}^{\mathbb{T}}\|_{1}.
\end{equation*}

Let $(f_{n})_{n\geq 0}$ be a martingale with respect to the dyadic
filtration $\{\mathcal{F}_{n}\}_{n\geq 0}$ on $\mathbb{I}$, where each $%
\mathcal{F}_{n}$ is generated by the dyadic intervals of length $2^{-n}$. We
recall the standard dyadic maximal function and square function: 
\begin{equation*}
S(f)(x):=\Biggl(\sum_{n=1}^{\infty }|f_{n}(x)-f_{n-1}(x)|^{2}\Biggr)^{1/2}.
\end{equation*}

The \emph{dyadic Hardy space} $H_{1}\left( \mathbb{I}\right) $ is defined by 
\begin{equation*}
H_{1}\left( \mathbb{I}\right) :=\{f:S(f)\in L_{1}(\mathbb{I})\},\qquad \Vert
f\Vert _{H_{1}\left( \mathbb{I}\right) }:=\Vert S(f)\Vert _{1}.
\end{equation*}

We next record some notation related to the binary expansion of integers and
to the matrix $\mathbb{T}$. For $n\in \mathbb{N}$ we write its dyadic
expansion as 
\begin{equation}
n=\sum_{j=0}^{\infty }\varepsilon _{j}(n)\,2^{j},\qquad \varepsilon
_{j}(n)\in \{0,1\}.  \label{rep2}
\end{equation}%
Set%
\begin{equation*}
\left\vert n\right\vert :=\max \left\{ s:\varepsilon _{s}\left( n\right)
=1\right\} .
\end{equation*}%
Then (\ref{rep2}) can be rewritten as%
\begin{equation*}
n=\sum_{j=0}^{\left\vert n\right\vert }\varepsilon _{j}(n)\,2^{j}.
\end{equation*}

For $s\in \mathbb{N}$ we set 
\begin{equation*}
n^{(s)}:=\sum_{j=s}^{\infty }\varepsilon _{j}(n)\,2^{j},\qquad
n(s):=\sum_{j=0}^{s}\varepsilon _{j}(n)\,2^{j}.
\end{equation*}%
Furthermore, for $0\leq m\leq n$ we define 
\begin{equation*}
T_{n}^{(m)}:=\sum_{l=m}^{n}t_{l,n}.
\end{equation*}

A useful decomposition of the kernel $V_{n}^{\mathbb{T}}$ was obtained in~%
\cite{GogNagyMathematics}. It can be written in the form 
\begin{equation*}
V_{n}^{\mathbb{T}}=\sum_{s=0}^{|n|}\varepsilon
_{s}(n)\,w_{n^{(s+1)}}\,\sum_{k=0}^{2^{s}-1}T_{n}^{\bigl(k+1+n^{(s+1)}\bigr)%
}\,w_{k},\qquad n\in \mathbb{P}.
\end{equation*}

Suppose that $\varepsilon _{s}(n)=1$ for some $s\in \{0,1,\dots ,|n|\}$.
Using the identity $2^{s}-1-k=(2^{s}-1)\oplus k$ (where $\oplus $ denotes
bitwise addition modulo $2$) and the relation $w_{n\oplus m}=w_{n}w_{m}$ (see \cite{SWS ,GES}),
and applying Abel's transformation, we obtain 
\begin{align*}
& \sum_{k=0}^{2^{s}-1}T_{n}^{\bigl(n^{(s)}-(2^{s}-1-k)\bigr)}\,w_{k} \\
& =\sum_{k=0}^{2^{s}-1}T_{n}^{\bigl(n^{(s)}-k\bigr)}\,w_{(2^{s}-1)\oplus k}
\\
& =w_{2^{s}-1}\sum_{k=1}^{2^{s}}T_{n}^{\bigl(n^{(s)}-k+1\bigr)}\,w_{k-1} \\
& =w_{2^{s}-1}\sum_{k=1}^{2^{s}-1}\Bigl(T_{n}^{\bigl(n^{(s)}-k+1\bigr)%
}-T_{n}^{\bigl(n^{(s)}-k\bigr)}\Bigr)D_{k}+w_{2^{s}-1}T_{n}^{\bigl(%
n^{(s+1)}+1\bigr)}D_{2^{s}} \\
&
=-\,w_{2^{s}-1}\sum_{k=1}^{2^{s}-1}t_{n^{(s)}-k,n}\,D_{k}+w_{2^{s}-1}T_{n}^{%
\bigl(n^{(s+1)}+1\bigr)}D_{2^{s}} \\
& =-\,w_{2^{s}-1}\sum_{k=1}^{2^{s}-2}\bigl(t_{n^{(s)}-k,n}-t_{n^{(s)}-k-1,n}%
\bigr)\,kK_{k} \\
& \quad
-\,w_{2^{s}-1}\sum_{k=1}^{2^{s}-1}t_{n^{(s+1)}+1,n}%
\,(2^{s}-1)K_{2^{s}-1}+w_{2^{s}-1}T_{n}^{\bigl(n^{(s+1)}+1\bigr)}D_{2^{s}},
\end{align*}%
where $K_{k}$ denotes the $k$th Walsh--Fejér kernel and $D_{k}$ the
Walsh--Dirichlet kernel.

Since $w_{n}w_{n^{(s+1)}}=w_{n(s)}$ and $D_{2^{s}}%
\,w_{2^{s}-1}w_{n(s)}=w_{2^{s}}D_{2^{s}}$, we arrive at the decomposition 
\begin{align}
w_{n}V_{n}^{\mathbb{T}}& =\underbrace{-\sum_{s=0}^{|n|}\varepsilon
_{s}(n)\,w_{n(s)}w_{2^{s}-1}\sum_{k=1}^{2^{s}-2}\bigl(%
t_{n^{(s)}-k,n}-t_{n^{(s)}-k-1,n}\bigr)\,kK_{k}}_{=:V_{n,1}^{\mathbb{T}}}
\label{V=V1+V2+V3} \\
& \quad \underbrace{-\sum_{s=0}^{|n|}\varepsilon
_{s}(n)\,w_{n(s)}w_{2^{s}-1}\,t_{n^{(s+1)}+1,n}\,(2^{s}-1)K_{2^{s}-1}}%
_{=:V_{n,2}^{\mathbb{T}}}  \notag \\
& \quad +\underbrace{\sum_{s=0}^{|n|}\varepsilon _{s}(n)\,T_{n}^{\bigl(%
n^{(s+1)}+1\bigr)}\,w_{2^{s}}D_{2^{s}}}_{=:V_{n,3}^{\mathbb{T}}}.  \notag
\end{align}

Thus the kernel $w_{n}V_{n}^{\mathbb{T}}$ decomposes naturally into three
components $V_{n,1}^{\mathbb{T}}$, $V_{n,2}^{\mathbb{T}}$ and $V_{n,3}^{%
\mathbb{T}}$, which will play distinct roles in the proof of Theorem \ref{divergence}.

We introduce the quantities 
\begin{equation*}
\widetilde{T}_{n(s),n}:=\sum_{l=0}^{n(s)-1}t_{n-l,n}.
\end{equation*}%
It is straightforward to check that 
\begin{equation*}
T_{n}^{\bigl(n^{(s+1)}+1\bigr)}=\sum_{l=n^{(s+1)}+1}^{n}t_{l,n}=%
\sum_{l=0}^{n(s)-1}t_{n-l,n}=:\widetilde{T}_{n(s),n}.
\end{equation*}%
When $\varepsilon _{s}\left( n\right) =1$ then 
\begin{equation*}
\widetilde{T}_{2^{s},n}\leq \widetilde{T}_{n(s),n}\leq \widetilde{T}%
_{2^{s+1},n}\leq 2\widetilde{T}_{2^{s},n}.
\end{equation*}%
We define%
\begin{equation*}
\Omega _{s}\left( n\right) :=\widetilde{T}_{2^{s},n},\text{ \ \ \ \ }0\leq
s\leq |n|.
\end{equation*}

We assume that the following limit exists: 
\begin{equation}
\lim_{(s,n)_{\Delta }\rightarrow \infty }\frac{\Omega _{s}\left( n\right) }{%
\Omega _{s-1}\left( n\right) }=\lim_{(s,n)_{\Delta }\rightarrow \infty }%
\frac{\widetilde{T}_{2^{s},n}}{\widetilde{T}_{2^{s-1},n}}=L\geq 1.
\label{limexist}
\end{equation}

It is easy to see that%
\begin{equation*}
P_{n}\left( \boldsymbol{\Omega }\right) =\sum\limits_{s=0}^{|n|}\varepsilon
_{s}\left( n\right) \widetilde{T}_{2^{s},n}w_{2^{s}}D_{2^{s}}
\end{equation*}%
and the martingale transform (\ref{conv}) can be written in the form%
\begin{equation}
M_{n}\left( \Omega \right) f=w_{n}\left( f\ast \left( w_{n}P_{n}\left( 
\boldsymbol{\Omega }\right) \right) \right) =\left( fw_{n}\right) \ast
P_{n}\left( \boldsymbol{\Omega }\right) .  \label{M1}
\end{equation}

From (\ref{V=V1+V2+V3}) we can write%
\begin{eqnarray}
f\ast V_{n}^{\mathbb{T}} &=&w_{n}\left( fw_{n}\right) \ast \left(
w_{n}V_{n}^{\mathbb{T}}\right) =w_{n}\left( fw_{n}\right) \ast V_{n,1}^{%
\mathbb{T}}  \label{M2} \\
&&+w_{n}\left( fw_{n}\right) \ast V_{n,2}^{\mathbb{T}}+w_{n}\left(
fw_{n}\right) \ast P_{n}\left( \boldsymbol{\Omega }\right) .  \notag
\end{eqnarray}

Throughout the paper we work on the dyadic group $\mathbb{I} := [0,1)$
equipped with the dyadic expansion 
\begin{equation*}
x = \sum_{j=0}^{\infty} x_{j} 2^{-(j+1)}, \qquad x_{j} \in \{0,1\}.
\end{equation*}

For each integer $m \ge 1$ and any choice of digits $t_{0},\dots,t_{m-1} \in
\{0,1\}$, we define the dyadic interval 
\begin{equation*}
I_{m}(t_{0},\dots,t_{m-1}) := \bigl\{ x \in \mathbb{I} : x_{j} = t_{j} \ 
\text{for } j = 0,\dots,m-1 \bigr\}.
\end{equation*}
Equivalently, 
\begin{equation*}
I_{m}(t_{0},\dots,t_{m-1}) = \Biggl[ \sum_{j=0}^{m-1} t_{j} 2^{-(j+1)}, \
\sum_{j=0}^{m-1} t_{j} 2^{-(j+1)} + 2^{-m} \Biggr).
\end{equation*}

In particular, in the notation 
\begin{equation*}
I_{a}\bigl(t_{0},\dots ,t_{a-\gamma (a)-1},t_{a-2\gamma (a)-1},\dots
,t_{a-\gamma (a)-1}\bigr)
\end{equation*}%
we mean the dyadic interval of rank $a$ whose dyadic digits coincide with
the specified values at the corresponding positions; the remaining digits
are irrelevant for the expressions in which this notation appears.

\section*{Conflicts of Interest}

The authors declare that they have no conflicts of interest.

\section*{Data Availability}

Not applicable.

\end{document}